\newtheorem{remark}{Remark}[section]
\newcommand{\p}{{\partial}}  
\newcommand{\nab}{\nabla}
\newcommand{\mct}{\mathcal{T}_h}
\newcommand{\Div}{{\rm div}\,}
\newcommand{\bv}{{\bm v}}
\newcommand{\bbR}{\mathbb{R}}
\newcommand{\pol}{\EuScript{P}}
\newcommand{\bpol}{\boldsymbol{\pol}}
\newcommand{\bw}{\bm w}
\newcommand{\bu}{\bm u}
\newcommand{\bn}{\bm n}
\newcommand{\bbN}{\mathbb{N}}
\newcommand{\calT}{\mathcal{T}}
\newcommand{\perim}{|\partial T|}
\newcommand{\bary}{{\rm bary}}
\newcommand{\inc}{{\rm inc}}
\newcommand{\asp}{{\varrho}}
\newenvironment{myproof}[1]{%
 {\em Proof  of #1}.%
}{\endproof}
\title{The Scott-Vogelius method for the Stokes problem on anisotropic meshes}
\author{Kiera Kean\thanks{University of Pittsburgh, Department of Mathematics.  Supported in part by NSF grant DMS-1817542.}\and Michael Neilan\thanks{University of Pittsburgh, Department of Mathematics.  Supported in part by NSF grant DMS-2011733}\and Michael Schneier\thanks{University of Pittsburgh, Department of Mathematics.}}
\date{}
\begin{document}
\maketitle

\begin{abstract}
This paper analyzes the Scott-Vogelius divergence-free element pair on anisotropic meshes. We explore the behavior of the inf-sup stability {constant} with respect to the aspect ratio on meshes generated with a standard barycenter mesh refinement strategy, as well as a newly introduced incenter refinement strategy. 
{Numerical experiments are presented which support the theoretical results.}

\end{abstract}

%%%%%%%%%%%%%%%%%%%%%%%%%%%%%%%%%%%%%%%%%%%%%%%%%%%%
%%%%%%%%%%%%%%%%%%%%%%%%%%%%%%%%%%%%%%%%%%%%%%%%%%%%
\section{Introduction}
Let $\Omega \subset \mathbb{R}^2$ be a regular open {polygon with boundary} $\Gamma$. We consider the Stokes equation with the no-slip boundary condition:
\begin{equation*}
\begin{aligned}
    -\nu \Delta \bu + \nabla p &= {\bm f}  \text{ in } \Omega,    \\
              \nabla \cdot \bu &= 0  \text{ in } \Omega, \\
                            \bu &= 0 \text{ on } \Gamma,
\end{aligned}
\end{equation*}
where $\bu$ is the velocity, $p$ is the pressure, ${\bm f}$ is a given body force, and $\nu$ is the viscosity.

In this manuscript we study the stability of the divergence-free Scott-Vogelius (SV) finite element pair on anisotropic meshes for the Stokes problem;
the results
trivially extend to other divergence free equations, e.g., the incompressible Navier-Stokes equations.
 Divergence-free methods and other pressure-robust schemes are an extremely active field of research
 (cf.~\cite{SIAM_DivFree17,Neilan20}) 
%  with methods ranging from a variety of finite element pairs 
% Another change I didn't really know how to handle, so I tried. Basically 'a variety of finite element pairs' didn't really strike me as a 'method' per se? It seemed gramatically odd but I might be trying to use English grammar for math so 
ranging from a variety of finite element pairs (e.g., \cite{ScottVogelius85,ArnoldQin92,Zhang05,Cockburn_etal05,Austin_etal04,Buffa_etal11,GuzmanNeilan14,FalkNeilan13,Guzman_etal20}) to modifying the formulation of the equations 
 (e.g., \cite{Linke14,Brennecke_etal15,Linke_etal16,LinkeMerdon16,Lederer_etal17,VerfurthZanotti19,KreuzerZanotti20}). Advantages of divergence-free methods include exact enforcement of conservation laws, pressure robustness with the velocity error being independent of the pressure error and viscosity term \cite{SIAM_DivFree17,Ahmed_etal18}, and improved stability and accuracy of timestepping schemes \cite{ART15, DS21}.

The Stokes equation has been studied on anisotropic meshes for a number of different element pairs. In \cite{ANS01} it was shown that for the Crouzeix-Raviart element, the inf-sup constant is independent of the aspect ratio on triangular and tetrahedral meshes. A similar result was shown for the Bernardi-Raugel finite element pair in two dimensions for classes of triangular and quadrilateral meshes in \cite{ApelNicaise04}. Recently, in \cite{BW19} it was shown for a specific class of anisotropic triangulation that the lowest order Taylor-Hood element was uniformly inf-sup stable. A nonconforming pressure robust method was studied in \cite{AKLM21}. Stability and convergence on anisotropic meshes for the Stokes equation has also been studied extensively for the hp-finite element method \cite{ABW15, SS98, SSR99}.

Up to this point there have been no theoretical results for $H^1$ conforming divergence free finite elements on anisotropic meshes. The low-order SV element pair is somewhat unique in that it is not inf-sup stable on general meshes, but requires special meshes e.g., the barycenter refinement {(or Clough-Tocher refinement)} which is obtained by connecting the vertices of each triangle on a given mesh to {its} barycenter. As pointed out in {\cite[p.12]{JNN18}} this gives rise to meshes with possibly very small and large angles. The impact of these angles on the inf-sup constant was stated as an open problem in \cite{JNN18}.

In this work 
%we study the dependency of the inf-sup constant on these angles
we show barycenter refinement on anisotropic meshes will necessarily lead to large angles 
%I feel like commented out section says `we looked at the angles produced by the barycenter refinement and described exactly what these angles do to the inf-sup constant' when our results don't explicitly deal with the angles of the subtriangulations. 
and propose an alternative mesh refinement strategy based on the incenter of each triangle.
This incenter refinement strategy produces a mesh {that} avoids large angles and allows a smaller increase in aspect ratio on refinement. 
We prove there is a linear relationship between the inf-sup constant and the inverse of the aspect ratio for both the barycenter and incenter refined mesh; numerical experiments show that these results are sharp. {Surprisingly, numerical tests indicate that there is not a 
significant difference, in terms of accuracy, between the incenter and barycenter refinement.}
%do not appear to indicate there is a significant difference in terms of accuracy for the incenter versus barycenter refinement. 

%Large portions of this paragraph use the exact same phrasing as the paragraph immediately preceeding it, this seemed jarring to me but may not be actually a problem? Not sure, but I tried to change it, let me know if I should change it back
The rest of this manuscript is organized as follows: In Section \ref{sec-prelims} we introduce notation and give some preliminary results that will be used for the inf-sup stability estimates. We also prove that the incenter refined mesh has superior aspect ratios and angles compared to the barycenter refined mesh. In Section \ref{sec-stability} we prove that the inf-sup constant scales linearly with the inverse of the aspect ratio for both barycenter and incenter refinement. 
In Section \ref{sec-numerics}, we verify numerically the geometric results proven in Section \ref{sec-prelims} and stability results proven in Section \ref{sec-stability}. {We also demonstrate that} there does not appear to be an appreciable difference in terms of accuracy for the incenter versus barycenter refinement. 
Finally, the appendix contains proofs of some technical lemmas.

%%%%%%%%%%%%%%%%%%%%%%%%%%%%%%%%%%%%%%%%%%%%%%%%%%%%%%%%%%%%%%%%%%%%%%%
%%%%%%%%%%%%%%%%%%%%%%%%%%%%%%%%%%%%%%%%%%%%%%%%%%%%%%%%%%%%%%%%%%%%%%%
%%%%%%%%%%%%%%%%%%%%%%%%%%%%%%%%%%%%%%%%%%%%%%%%%%%%%%%%%%%%%%%%%%%%%%%
\section{Preliminaries}\label{sec-prelims}

Let $\mct$ denote a conforming 
simplicial triangulation of $\Omega\subset \bbR^2$.
We denote the vertices and edges of $T$ as $\{z_i\}_{i=1}^3$
and $\{e_i\}_{i=1}^3$ respectively, labeled such that 
$z_i$ is opposite of $e_i$.  
Set $h_i=|e_i|$ and without loss of generality, 
we assume $h_1\le h_2\le h_3$.
We denote by $\rho_T$ the diameter of the incircle of $T$
and set $h_T = h_3$.
Let $\alpha_i$ be the angle of $T$ at vertex $z_i$,
note that $\alpha_1\le \alpha_2\le \alpha_3$.

Let $z_0\in T$ be an interior point of $T$,
and set $T^{ct} = \{K_1,K_2,K_3\}$ to be  the local (Clough-Tocher) triangulation of $T$, obtained
by connecting the vertices of $T$ to $z_0$.
The three triangles $\{K_i\}_{i=1}^3$ are labeled such
that $\p K_i\cap \p T = e_i$.
Let $a_T$ be the altitude of $T$ with respect to edge $e_3$,
and let $k_i$ be the altitude of $K_i$ with respect to $e_i$ (cf.~Figure \ref{fig:CTSingleSplit}).

%%%%%%%%%%%%%%%%%%%%%%%%%%%%%%%%%%%%%%%%%%%%%%%%%%%%%%%%
%%%%%%%%%%%%%%%%%%%%%%%%%%%%%%%%%%%%%%%%%%%%%%%%%%%%%%%%
%%%%%%%%%%%%%%%%%%%%%%%%%%%%%%%%%%%%%%%%%%%%%%%%%%%%%%%%
\subsection{Geometric results and dependence of split point} We examine the {dependencies and properties} of the 
local triangulation of $T$ on the choice of split point $z_0$. 
In particular, we consider geometric properties of the triangulations obtained by connecting vertices of $T$ to the barycenter and the incenter of $T$. 
First, we require a few definitions.
%%%%%%%%%%%%%%%%%
%%%%%%%%%%%%%%%%%
%%%%%%%%%%%%%%%%%
\begin{definition}
The {\em barycenter} of $T$ is given by
\[
z_{\bary} = \frac13 (z_1+z_2+z_3).
\]
The {\em incenter} of $T$ is given by
\[
z_{\inc} = \frac1{|\p T|}(h_1 z_1 + h_2 z_2+h_3 z_3).
\]
\end{definition}

%%%%%%%%%%%%%%%%%
%%%%%%%%%%%%%%%%%
%%%%%%%%%%%%%%%%%
\begin{definition}\
\begin{enumerate}
\item The {\em aspect ratio} of $T$ is given by
\[
\asp_T:=\frac{h_T}{\rho_T} = \frac{|\p T|h_T}{4|T|}.
\]

\item The {\em aspect ratio} of $T^{ct}$, denoted by $\asp_{T^{ct}}$, is defined as the maximum of the aspect ratio of the three triangles in the refinement, i.e.,
\[
\asp_{T^{ct}}:=\max_{K_i\in T^{ct}} \asp_{K_i}.
\]
\end{enumerate}
\end{definition}

%%%%%%%%%%%%%%%%%
%%%%%%%%%%%%%%%%%
%%%%%%%%%%%%%%%%%
\begin{definition}[\cite{AcostaEtAl}]
A triangle $T$ is said to satisfy a large angle condition, written as $LAC(\delta)$, if there exists $\delta >0$ such that $\alpha_i<\pi-\delta$ for $i = 1,2,3$.
%\KK{I was mistaken, it was MAC($\delta$), should I change the L to an M?} 

\end{definition}

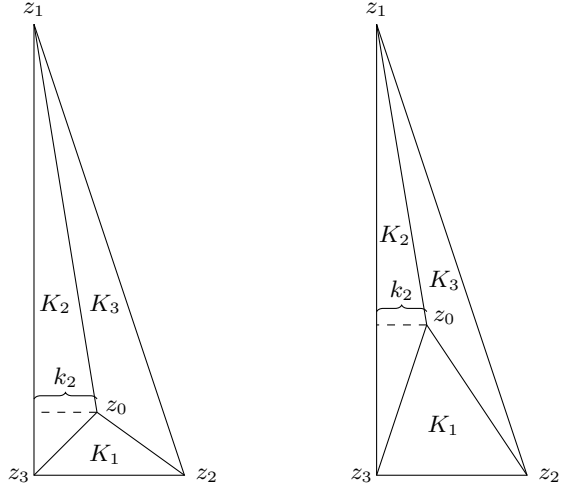
\begin{figure}
\begin{center}
\begin{tikzpicture}[scale=2]
\coordinate (Z3) at (0,0);
\coordinate (Z2) at (1,0);
\coordinate (Z1) at (0,3);

\coordinate (Z0) at (0.41886117,0.41886117);

\coordinate (A2) at (0,0.41886117);

\draw[-](Z1)--(Z2)--(Z3)--(Z1);

\draw[-](Z1)--(Z0);
\draw[-](Z2)--(Z0);
\draw[-](Z3)--(Z0);

\draw[-,dashed](Z0)--(A2);

\draw[decoration={brace,mirror,raise=5pt},decorate]
  (Z0) -- node[above=6pt] {\small $k_2$} (A2);

\node (n3) at (0.55,0.45) {\small$z_0$};
\node (n3) at (-0.1,0.) {\small$z_3$};
\node (n3) at (1.15,0.) {\small$z_2$};
\node (n3) at (0.,3.1) {\small$z_1$};

\node (n3) at (0.472953723,1.13962039) {\small $K_3$};
\node (n3) at (0.13962039,1.13962039) {\small $K_2$};
\node (n3) at (0.472953723,0.13962039) {\small $K_1$};

\end{tikzpicture}
%%%
\qquad\qquad
%%%
\begin{tikzpicture}[scale=2]
\coordinate (Z3) at (0,0);
\coordinate (Z2) at (1,0);
\coordinate (Z1) at (0,3);

\coordinate (Z0) at (1/3,1);

\coordinate (A2) at (0,1.);

\draw[-](Z1)--(Z2)--(Z3)--(Z1);

\draw[-](Z1)--(Z0);
\draw[-](Z2)--(Z0);
\draw[-](Z3)--(Z0);

\draw[-,dashed](Z0)--(A2);

\draw[decoration={brace,mirror,raise=5pt},decorate]
  (Z0) -- node[above=6pt] {\small $k_2$} (A2);

\node (n3) at (0.45,1.05) {\small$z_0$};
\node (n3) at (-0.1,0.) {\small$z_3$};
\node (n3) at (1.15,0.) {\small$z_2$};
\node (n3) at (0.,3.1) {\small$z_1$};

\node (n3) at (0.45,1.2962039) {\small $K_3$};
\node (n3) at (0.11962039,1.5962039) {\small $K_2$};
\node (n3) at (4/9,1/3) {\small $K_1$};
\end{tikzpicture}

%\begin{tikzpicture}
%\draw[->] (0,0) coordinate (O) -- (3,0);
%\draw[->] (O) -- (0,4);
%\node[inner sep=1.5pt,fill,circle,label={60:$(x,y)$}] at (2,3) (point) {};
%\draw (0,0) -- (point);
%\draw (1.8,0) -- ++(0,0.2) -- ++(0.2,0);
%\draw[dashed] (2,0) coordinate (pointx) -- (point); 
%\draw[decoration={brace,mirror,raise=5pt},decorate]
%  (2,0) -- node[right=6pt] {$y$} (point);
%\draw[decoration={brace,mirror,raise=5pt},decorate]
%  (0,0) -- node[below=6pt] {$x$} (2,0);
%\path pic ["$\theta$", draw, ->] {angle=pointx--O--point};  
%\end{tikzpicture}
\end{center}
\caption{\label{fig:CTSingleSplit} The Clough--Tocher split of a single triangle
taking the split point $z_0$ to be the incenter (left) and barycenter (right).}
\end{figure}

%%%%%%%%%%%%%%%%%
%%%%%%%%%%%%%%%%%
%%%%%%%%%%%%%%%%%
\begin{lemma}\label{barycenterLA}
Let the split point be taken to be the barycenter, i.e.,
$z_0 = z_{\bary}$. 
Then as the aspect ratio of $T$ goes to infinity, the largest angle in $K_3$  goes to $\pi$, i.e., the large angle condition will be violated in $T^{ct}$ regardless of the angles of $T$.
\end{lemma}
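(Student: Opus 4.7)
The plan is to set up coordinates on $T$ and reduce the question to showing that the two base angles of $K_3$ at $z_1$ and $z_2$ both tend to $0$ as $\asp_T\to\infty$, which then forces the remaining angle at $z_0$ to tend to $\pi$. Place $e_3$ on the $x$-axis with $z_1=(0,0)$, $z_2=(h_3,0)$, and $z_3=(x_3,a_T)$ above. Since $z_0=\frac{1}{3}(z_1+z_2+z_3)=\bigl((h_3+x_3)/3,\,a_T/3\bigr)$, the apex of $K_3$ sits at height exactly $a_T/3$ above the base $e_3$.

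The first step is to translate $\asp_T\to\infty$ into smallness of $a_T/h_3$. Equating the two area formulas $|T|=\tfrac12 h_3 a_T=\tfrac12\rho_T |\p T|$ and using $|\p T|=h_1+h_2+h_3\le 3h_3$, one obtains $a_T/h_3\le 3\rho_T/h_3=3/\asp_T\to 0$.

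The second step is to pin down the horizontal coordinate $x_3$ using the ordering $h_1\le h_2\le h_3$. The identities $|z_3-z_1|=h_2\le h_3$ and $|z_3-z_2|=h_1\le h_3$ immediately force $x_3\in[0,h_3]$, so the foot of the altitude from $z_3$ (and hence from $z_0$) lies inside $e_3$. Consequently the horizontal distances from the foot of the altitude from $z_0$ to the vertices $z_1$ and $z_2$ are $(h_3+x_3)/3$ and $(2h_3-x_3)/3$, both at least $h_3/3$. The tangents of the base angles of $K_3$ at $z_1$ and $z_2$ are therefore $a_T/(h_3+x_3)$ and $a_T/(2h_3-x_3)$, each bounded by $a_T/h_3\to 0$. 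Thus both base angles vanish in the limit and the angle at $z_0$ tends to $\pi$; since it eventually exceeds $\pi/2$ it is the largest angle of $K_3$, which is what is claimed.

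The argument is elementary, so I do not anticipate a genuine obstacle. The only delicate ingredient is using the full ordering $h_1\le h_2\le h_3$ to localize $x_3$ in $[0,h_3]$; without this, an obtuse angle of $T$ at $z_1$ or $z_2$ could push the foot of the altitude outside $e_3$, in which case the clean bound on the base angles by $a_T/h_3$ would have to be replaced by a more careful case analysis, though the final conclusion would be unchanged.
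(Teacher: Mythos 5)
Your argument is correct and follows essentially the same route as the paper: both proofs show the two angles of $K_3$ along $e_3$ are bounded by a constant times $a_T/h_T$ (you via tangents in explicit coordinates, the paper via sines and the median-length formulas) and then use that $\asp_T$ is comparable to $h_T/a_T$. The only blemish is the identity $|T|=\tfrac12\rho_T|\p T|$: since the paper takes $\rho_T$ to be the incircle \emph{diameter}, the correct relation is $|T|=\tfrac14\rho_T|\p T|$, but this only affects a constant and your conclusion $a_T/h_3\le C/\asp_T\to 0$ stands.
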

\begin{proof}
Recall the labeling assumption $h_1\le h_2\le h_3$.
A simple calculation shows that
the side lengths of $K_3$ are $h_3, \frac{\sqrt{2h_2^2+2h_3^2-h_1^2}}{3},\frac{\sqrt{2h_1^2+2h_3^2-h_2^2}}{3}$, and we easily find that each side length is bounded below by $\frac{h_T}{3}$. 

Let $\gamma_1,\gamma_2,\gamma_3$ be the 
angles of $K_3$ at $z_1$, $z_2$, and $z_{\bary}$, respectively.
By properties of the barycenter, $k_3 = \frac13 a_T$, where we recall
that $k_3$ and $a_T$ are, respectively, the altitudes of $K_i$ and $T$  with respect
to $e_3$.  Thus,
\begin{equation*}
    \begin{aligned}
      \sin{\gamma_1} = \frac{3k_3}{\sqrt{2h_2^2+2h_3^2-h_1^2}}\leq \frac{3a_T}{h_T},\\
      \sin{\gamma_2} = \frac{3k_3}{\sqrt{2h_1^2+2h_3^2-h_2^2}}\leq \frac{3a_T}{h_T}.
    \end{aligned}
\end{equation*}
The bound $2h_T\leq\perim\leq 3h_T$ gives us 
\[
\frac{h_T}{a_T} \leq \frac{\perim h_T }{ 4|T|} \leq \frac{3h_T}{2a_T},
\]
and so the aspect ratio of $T$ is equivalent to 
 $\frac{h_T}{a_T}$. Thus, as the aspect ratio of $T$ goes to infinity, $\gamma_1$ and $\gamma_2$ go to zero, implying that $\gamma_3$ goes to $\pi$.\hfill
\end{proof}

%%%%%%%%%%%%%%%%%
%%%%%%%%%%%%%%%%%
%%%%%%%%%%%%%%%%%
\begin{lemma}\label{lem:LACInc}
Let the split point of $T$ be taken to be the incenter, i.e., $z_0 = z_{\inc}$. Then, if $T$ satisfies $LAC(\delta)$, all triangles in $T^{ct}$ satisfy $LAC(\frac{\delta}{2})$.
\end{lemma}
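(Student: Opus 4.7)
The plan is to exploit the defining geometric property of the incenter: it lies at the intersection of the three angle bisectors of $T$. Consequently, for each sub-triangle $K_i \in T^{ct}$, the segments from $z_{\inc}$ to the two vertices of $e_i$ are exactly the angle bisectors of $T$ at those vertices. This makes the angles of $K_i$ explicitly computable in terms of the angles of $T$.

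Concretely, fix $K_i$ with vertices $z_j, z_k, z_{\inc}$ where $\{i,j,k\} = \{1,2,3\}$. First I would note that the angle of $K_i$ at $z_j$ equals $\alpha_j/2$ and the angle at $z_k$ equals $\alpha_k/2$, since $z_{\inc}$ sits on the bisectors issuing from $z_j$ and $z_k$. By the angle-sum identity in $K_i$, the remaining angle at $z_{\inc}$ must equal $\pi - \alpha_j/2 - \alpha_k/2$, which, using $\alpha_i + \alpha_j + \alpha_k = \pi$, simplifies to $(\pi + \alpha_i)/2$.

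Next I would check the three angles against the threshold $\pi - \delta/2$. The angles $\alpha_j/2$ and $\alpha_k/2$ are each strictly less than $\pi/2$, and hence less than $\pi - \delta/2$ (one can assume $\delta \le \pi/2$ without loss of generality, since $LAC(\delta)$ becomes stronger as $\delta$ grows). For the angle at $z_{\inc}$, the hypothesis $\alpha_i < \pi - \delta$ gives
\[
\frac{\pi + \alpha_i}{2} < \frac{\pi + (\pi - \delta)}{2} = \pi - \frac{\delta}{2},
\]
which is exactly the desired bound. Applying the same argument to each of $K_1, K_2, K_3$ finishes the proof.

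There is no real obstacle here: the argument is a direct consequence of the bisector characterization of the incenter plus the angle sum in a triangle. The only subtlety worth flagging in the write-up is the reduction to $\delta \le \pi/2$ so that the bounds on $\alpha_j/2$ and $\alpha_k/2$ are uniform; everything else is a one-line computation.
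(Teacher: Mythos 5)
Your proof is correct and follows essentially the same route as the paper's: both use the bisector characterization of the incenter to read off the angles of $K_i$ as $\alpha_j/2$, $\alpha_k/2$, and $(\pi+\alpha_i)/2 = \pi - \tfrac12(\alpha_j+\alpha_k)$, and then bound the angle at the incenter by $\pi - \delta/2$ using $\alpha_i < \pi - \delta$. The only (harmless) quibble is that your reduction to $\delta \le \pi/2$ is unnecessary — any $T$ satisfying $LAC(\delta)$ forces $\delta < 2\pi/3$, so $\alpha_j/2 < \pi/2 < \pi - \delta/2$ holds automatically — and strictly speaking that reduction would weaken the conclusion to $LAC(\pi/4)$ when $\delta > \pi/2$, so it is better omitted.
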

\begin{proof}
The incenter is defined as the intersection of the angle bisectors. Thus, $K_i$ has angles $\frac{\alpha_{i+1}}{2}, \frac{\alpha_{i+2}}{2}, \pi-\frac{1}{2}(\alpha_{i+1}+\alpha_{i+2})$.

As $T$ satisfies $LAC(\delta)$, 
$\alpha_i <\pi - \delta = \alpha_1 + \alpha_2 + \alpha_3 - \delta$, 
and therefore
$\delta \leq \alpha_{i+1}+\alpha_{i+2}$.
We then conclude that $\pi-\frac{1}{2}(\alpha_{i+1}+\alpha_{i+2})\leq \pi-\frac{\delta}{2}$
implying that $K_i$ satisfies $LAC(\frac{\delta}{2})$.
\end{proof}

%%%%%%%%%%%%%%%%%
%%%%%%%%%%%%%%%%%
%%%%%%%%%%%%%%%%%
\begin{lemma}\label{lem:IncAsp}
Let $\asp_{T^{ct}_\inc}$ 
be the aspect ratio of $T^{ct}$ when refined with {respect to} the incenter. The following bounds hold:
 \[
2
\asp_T\leq \asp_{T^{ct}_\inc}
 \leq 2\Big(1+\frac{a_T}{h_T}\Big)\asp_T.
 \]
\end{lemma}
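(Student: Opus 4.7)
The plan is to exploit a clean consequence of choosing the incenter as the split point: in each sub-triangle $K_i$, the angles inherited at $z_{i+1}$ and $z_{i+2}$ are the half-angles $\alpha_{i+1}/2$ and $\alpha_{i+2}/2$, so the remaining angle at $z_{\inc}$ is $\pi/2+\alpha_i/2$, which is strictly obtuse. Being the largest angle in $K_i$, it forces the opposite edge $e_i$ to be the longest, yielding the key identity $h_{K_i}=h_i$. Combined with $|K_i|=h_i\rho_T/4$ (since the altitude of $K_i$ from $z_{\inc}$ onto $e_i$ equals the inradius $r=\rho_T/2$), substitution into $\asp_{K_i}=|\partial K_i|h_{K_i}/(4|K_i|)$ collapses to
\[
\asp_{K_i} \;=\; \frac{|\partial K_i|}{\rho_T},
\]
reducing the problem to estimating the perimeters $|\partial K_i|$.

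For the lower bound, the triangle inequality applied to the two edges of $K_3$ meeting at $z_{\inc}$ gives $|z_{\inc}-z_1|+|z_{\inc}-z_2|\ge |z_1-z_2|=h_T$, so $|\partial K_3|\ge 2h_T$ and therefore
\[
\asp_{T^{ct}_\inc}\;\ge\;\asp_{K_3}\;\ge\;\frac{2h_T}{\rho_T}\;=\;2\asp_T.
\]

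For the upper bound, let $q_i$ be the foot of the perpendicular from $z_{\inc}$ onto $e_i$ (which coincides with the incircle's tangent point on $e_i$) and let $d_1,d_2$ with $d_1+d_2=h_i$ denote the lengths of the two resulting segments of $e_i$. By Pythagoras and the elementary subadditivity $\sqrt{d^2+r^2}\le d+r$, the two non-base edges of $K_i$ satisfy $|z_{\inc}-z_{i+j}|\le d_j+r$, whence $|\partial K_i|\le 2h_i+2r=2h_i+\rho_T$. Since $h_i\le h_T$ this yields $\asp_{K_i}\le 2h_i/\rho_T+1\le 2\asp_T+1$ for every $i$. To close the gap to the stated bound, I would use the identity $a_T/\rho_T=|\partial T|/(2h_T)$ together with the triangle inequality $|\partial T|\ge 2h_T$ to obtain $1\le 2a_T/\rho_T=2a_T\asp_T/h_T$, so
\[
\asp_{T^{ct}_\inc}\;\le\; 2\asp_T+1 \;\le\; 2\Big(1+\frac{a_T}{h_T}\Big)\asp_T.
\]

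The main obstacle is pinning down $h_{K_i}=h_i$; this is precisely what distinguishes the incenter refinement from the barycenter refinement in the present argument, since in the latter the angle of $K_i$ at the split point need not be obtuse and the longest edge of $K_i$ may well lie inside $T$ (as exploited in Lemma \ref{barycenterLA}). Once this identity is available, the remaining work consists only of the triangle inequality and the subadditivity $\sqrt{d^2+r^2}\le d+r$.
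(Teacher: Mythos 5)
Your proposal is correct and follows the paper's argument in all essential respects: the obtuse angle $\frac{\pi+\alpha_i}{2}$ at the incenter forces $h_{K_i}=h_i$, the altitude identity $|K_i|=\frac{h_i\rho_T}{4}$ reduces everything to $\asp_{K_i}=\frac{|\partial K_i|}{\rho_T}$, and the lower bound comes from $|\partial K_3|\ge 2h_T$ exactly as in the paper. The only divergence is the upper bound, where you estimate $|\partial K_i|\le 2h_i+\rho_T$ via the incircle's tangent points instead of the paper's $|\partial K_i|\le \perim\le 2h_T+2a_T$; this actually yields the slightly sharper intermediate bound $\asp_{K_i}\le 2\asp_T+1$, which you then correctly relax to the stated $2\bigl(1+\frac{a_T}{h_T}\bigr)\asp_T$ using $\frac{a_T}{\rho_T}=\frac{\perim}{2h_T}\ge 1$.
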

\begin{proof}
First note that if a triangle is refined with the incenter, then the longest edge of each subtriangle is the edge shared with the original triangle.
Indeed,  the angles of the triangle $K_i$ in the refinement are $\frac{\alpha_{i+1}}{2},\frac{\alpha_{i+2}}{2}, \frac{\pi-(\alpha_{i+1}+\alpha_{i+2})}{2}=\frac{\pi+\alpha_i}{2}$.
As $\frac{\pi+\alpha_i}{2}$, the angle at the incenter, is an obtuse angle, it is opposite the longest edge of $K_i,$ the edge shared with $T.$ 
Thus, the aspect ratio of $K_i$ is  $\frac{h_i}{\rho_{K_i}}=\frac{|\partial K_i|h_{i}}{4|K_i|}$.

 By definition of incenter, the altitude of $K_i$ with respect to $e_i$ is the inradius of $T.$ Therefore 
 $|K_i|=\frac{k_i h_i}{2} = \frac{\rho_T h_i}{4} = \frac{|T|h_i}{\perim}$, and so
 %. By remark \ref{hKequalseK}, $h_i=h_{K_i}$ and thus
\begin{equation*}
    \begin{aligned}
\asp_{K_i} & = \frac{|\partial K_i|h_{i}}{4|K_i|}
  =\frac{\perim|\partial K_i|}{4|T|}
  = 
    \frac{|\partial K_i|}{h_T} \asp_T. %\Big(\frac{\rho_T}{ h_T}\Big)    
          \end{aligned}
\end{equation*}
For an arbitrary triangle $K_i$ in the refinement, we have $|\partial K_i| \leq \perim\leq 2h_T +2a_T$, giving us 
\[
\asp_{T^{ct}_\inc}
 \leq 2\Big(1+\frac{a_T}{h_T}\Big)\asp_T.
\]
As $K_3$ shares the longest edge with $T,$ we have $2h_T\leq |\partial K_3|,$ giving us 
\[
2\asp_T\leq\asp_{K_3} \le\asp_{T^{ct}_\inc}.
\]
\hfill
\end{proof}

%%%%%%%%%%%%%%%%%
%%%%%%%%%%%%%%%%%
%%%%%%%%%%%%%%%%%
\begin{lemma}\label{lem:BaryAsp}
Let $\asp_{T^{ct}_\bary}$ be the aspect ratio of $T^{ct}$ when refined with the barycenter. The following bounds hold:
\[
\frac{3}{1+\frac{a_T}{h_T}}\asp_T \leq \asp_{T^{ct}_\bary}  \leq 3\asp_T.
\]
\end{lemma}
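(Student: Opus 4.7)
My plan is to mirror the proof of Lemma~\ref{lem:IncAsp}, substituting the equal-area property of the barycenter for the equal-altitude property of the incenter. Since $z_0=z_\bary$, one has $|K_i|=|T|/3$ for every $i$, so
\[
\asp_{K_i}=\frac{|\partial K_i|\,h_{K_i}}{4|K_i|}=\frac{3|\partial K_i|\,h_{K_i}}{4|T|},\qquad \frac{\asp_{K_i}}{\asp_T}=\frac{3|\partial K_i|\,h_{K_i}}{|\partial T|\,h_T}.
\]
The lemma therefore reduces to controlling $|\partial K_i|$ and $h_{K_i}$ in terms of $|\partial T|$, $h_T$, and $a_T$.

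Writing $d_i$ for the distance from $z_i$ to $z_\bary$, one has $d_i=\tfrac{2}{3}m_i$ where $m_i$ is the length of the corresponding median of $T$; combined with the classical bound $m_i\le(h_j+h_k)/2$ this gives $d_i\le(h_j+h_k)/3\le 2h_T/3$. Summing over the two non-$e_i$ edges of $K_i$ and applying the triangle inequality $h_i\le h_j+h_k$ for $T$ yields $d_j+d_k\le h_j+h_k$, so $|\partial K_i|=h_i+d_j+d_k\le h_1+h_2+h_3=|\partial T|$. Combined with the trivial bound $h_{K_i}\le h_T$ (since $K_i\subset T$), this gives the upper bound $\asp_{K_i}\le 3\asp_T$ for every $i$.

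For the lower bound I focus on $K_3$, which inherits the longest edge of $T$. Since $d_1,d_2\le 2h_T/3<h_T$ by the estimate above, the longest edge of $K_3$ is $e_3$, so $h_{K_3}=h_T$ and $\asp_{K_3}/\asp_T=3|\partial K_3|/|\partial T|$. A triangle inequality inside the sub-triangle with vertices $z_1,z_2,z_\bary$ gives $d_1+d_2\ge h_T$, hence $|\partial K_3|\ge 2h_T$. To finish I need $|\partial T|\le 2h_T+2a_T$: placing $e_3$ on the $x$-axis with the opposite vertex at $(x,a_T)$ for some $x\in[0,h_T]$, the elementary inequality $\sqrt{p^2+q^2}\le p+q$ applied to $\sqrt{(h_T-x)^2+a_T^2}$ and $\sqrt{x^2+a_T^2}$ yields $h_1+h_2\le h_T+2a_T$. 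Combining gives
\[
\frac{\asp_{K_3}}{\asp_T}\ge \frac{3\cdot 2h_T}{2h_T+2a_T}=\frac{3}{1+a_T/h_T},
\]
and the lemma follows since $\asp_{T^{ct}_\bary}\ge\asp_{K_3}$.

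The only step requiring real thought is the geometric estimate $|\partial T|\le 2h_T+2a_T$, which is precisely what produces the factor $1+a_T/h_T$ in the denominator; the remaining calculations parallel those of the incenter case almost verbatim.
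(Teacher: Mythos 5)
Your proof is correct and follows essentially the same route as the paper's: the identity $\asp_{K_i}=\frac{3|\partial K_i|h_{K_i}}{|\partial T|h_T}\asp_T$ from $|K_i|=|T|/3$, the bounds $|\partial K_i|\le|\partial T|$ and $h_{K_i}\le h_T$ for the upper estimate, and $|\partial K_3|\ge 2h_T$, $h_{K_3}=h_T$, $|\partial T|\le 2h_T+2a_T$ for the lower one. You merely supply explicit justifications (median bounds, the altitude-foot computation) for geometric facts the paper takes for granted, which is a welcome but not substantive difference.
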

\begin{proof}
By properties of the barycenter, $|K_i| = \frac{|T|}{3}$. Thus, the aspect ratio
of $K_i$ is
\begin{equation*}
    \begin{aligned}
\asp_{K_i}
    & = \frac{|\partial K_i|h_{K_i}}{4|K_i|}
    =\frac{3|\partial K_i|h_{K_i}}{4|T|}
    =\frac{3|\partial K_i|h_{K_i}}{\perim h_T}\asp_T. %\Big(\frac{\rho_T}{ h_T}\Big)
          \end{aligned}
\end{equation*}

For all triangles, we have $|\partial K_i|\leq \perim$ and $h_{K_i}\leq h_T$, and so, 
\[
\asp_{T^{ct}_\bary}\leq 3\asp_T . %\Big(\frac{\rho_{T^{ct}}}{h_{T^{ct}}}\Big)_{bary}
\]

For $K_3$ we  use the bounds $ 2h_T\leq|\partial K_3|$ and $ \perim\leq 2h_T + 2a_T$ to get the following lower bound: 
\[
\frac{3}{1+\frac{a_T}{h_T}}\asp_T\leq\asp_{T^{ct}_\bary}.
\]
\hfill
\end{proof}

%%%%%%%%%%%%%%%%%%%%%
%%%%%%%%%%%%%%%%%%%%%
%%%%%%%%%%%%%%%%%%%%%
\begin{remark}
Lemmas \ref{barycenterLA}--\ref{lem:BaryAsp}
indicate superior properties of the incenter refinement
compared to barycenter refinement.  In particular, the incenter
refinement inherits the large angle condition of its parent triangle.
Furthermore, Lemmas \ref{lem:IncAsp}--\ref{lem:BaryAsp} show that
for $T$ with large aspect ratio, the barycenter refinement induces
a triangulation with aspect ratio approximately three times
that of its parent triangle; in contrast, the incenter refinement
yields triangles with aspect ratios approximately twice that of its parent triangle.

On the other hand, we comment that (i) the finite element spaces
given below inherit the approximation properties of the parent triangulation,
in particular, the piecewise polynomial spaces may still possess optimal-order approximation
properties even if $T^{ct}$ does not satisfy the large angle condition; (ii) 
the inf-sup stability constants derived below are given in terms of $\asp_T$ (not $\asp_{T^{ct}}$).
Nonetheless, the analysis will show that, while asymptotically similar
with respect to aspect ratio,
the incenter
refinement leads to better constants in the stability and convergence analysis than the barycenter refinement.
\end{remark}

\begin{remark}
For the rest of the paper, the constant $C$ will
denote a generic positive constant independent of the mesh
size and aspect ratio that may take different values
at each occurence.
\end{remark}

%%%%%%%%%%%%%%%%%%%%%%%%%%%%%%%%%%%%%%%%%%%
%%%%%%%%%%%%%%%%%%%%%%%%%%%%%%%%%%%%%%%%%%%
%%%%%%%%%%%%%%%%%%%%%%%%%%%%%%%%%%%%%%%%%%%
\section{Stability Estimates}\label{sec-stability}
In this section, we derive stability estimates of the lowest-order
Scott-Vogelius Stokes pair in two dimensions.  
This pair is defined on the globally refined Clough-Tocher triangulation
given by
\[
\mct^{ct} = \{K\in T^{ct}: \exists T\in \mct\}.
\]

For a  triangulation $S_h$ and $k\in \bbN_+$, we define the spaces
\begin{alignat*}{2}
\pol_k(S_h) &= \{q\in L^2(D):\ q|_{K}\in \pol_k(K)\ \forall K\in S_h\},\qquad 
&& \mathring{\pol}_k(S_h) = \pol_k(S_h)\cap L^2_0(D),\\
\pol^c_k(S_h) &= \pol_k(T^{ct})\cap H^1(D),\qquad &&\mathring{\pol}_k^c(S_h) = \pol^c_k(S_h)\cap H^1_0(D),
\end{alignat*}
where $D = {\rm int} \bigcup_{K\in S_h} \bar K$.
Analogous vector-valued spaces are denoted in boldface, e.g., $\bpol^c_k(S_h) = [\pol^c_k(S_h)]^2$.
The lowest-order Scott-Vogelius pair is then $\mathring{\bpol}_2^c(\mct^{ct})- \mathring{\pol}_1(\mct^{ct})$.

%We denote by $\hat T$ the reference triangle with vertices $(1,0),(0,1)$ and $(0,0)$,
%and for given $T\in \calT_h$ let $F_T:\hat T\to T$ be an affine bijection
%from $\hat T$ and $T$.  The Jacobian of $F_T$ and its inverse satisfy
%the well-known estimates:
%\begin{equation}\label{eqn:DFBounds}
%|DF_T|\le C h_T,\qquad |DF_T^{-1}|\le C \rho_T^{-1}.
%\end{equation}
%Set $\hat T^{ct}_T$ to be the Clough-Tocher partition 
%of $\hat T$ induced by $F_T$, i.e.,
%\[
%\hat T^{ct}_T = \{\hat K = F_T^{-1}(K):\ K\in T^{ct}\}.
%\]

The  proof of inf-sup stability of the two-dimensional Scott-Vogelius pair
on Clough-Tocher triangulations is based on a macro element technique.
Inf-sup stability is first shown on a single macro element
consisting of three triangles, and then these local results
are ``glued together'' using the stability of the $\bpol^c_2-\pol_0$ pair.

We now summarize the proof
of inf-sup stability of the $\mathring{\bpol}^c_2(\mct^{ct})-\mathring{\pol}_1(\mct^{ct})$ pair
given in \cite[Proposition 6.1]{GuzmanNeilan18}.  The stability proof relies on two preliminary results.  The first states
the well-known stability of the $\bpol_2^c-\pol_0$ pair \cite{BernardiRaugel85,BoffiEtAl}.
The second is a bijective property of the divergence operator
acting on local polynomial spaces \cite{GuzmanNeilan18,ArnoldQin92}.
%%%%%%%%%%%%%%%
%%%%%%%%%%%%%%%
%%%%%%%%%%%%%%%
\begin{lemma}[Stability of $\bpol^c_2-\pol_0$ pair on $\mct$]\label{lem:P2P0}
There exists $\beta_0>0$ such that
\[
\beta_0 \|q\|_{L^2(\Omega)}\le \sup_{0\neq \bv\in \mathring{\bpol}_2^c(\mct)} 
\frac{\int_\Omega (\Div \bv) q}{\|\nab \bv\|_{L^2(\Omega)}}\qquad \forall q\in \mathring{\pol}_0(\mct).
\]
\end{lemma}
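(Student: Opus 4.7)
The plan is to establish the bound via the Fortin criterion. Given $q\in \mathring{\pol}_0(\mct)\subset L^2_0(\Omega)$, the continuous inf-sup stability of the divergence (Bogovskii's construction) furnishes some $\bw \in [H^1_0(\Omega)]^2$ with $\Div\,\bw=q$ and $\|\nab \bw\|_{L^2(\Omega)}\le C_\Omega\|q\|_{L^2(\Omega)}$. It then suffices to exhibit a Fortin operator $\Pi_h : [H^1_0(\Omega)]^2 \to \mathring{\bpol}^c_2(\mct)$ satisfying
\[
\int_T \Div(\bw-\Pi_h\bw)\,dx = 0 \quad \forall T\in \mct,\qquad \|\nab \Pi_h \bw\|_{L^2(\Omega)}\le C\|\nab \bw\|_{L^2(\Omega)},
\]
with $C$ independent of the mesh size and aspect ratio. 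Indeed, since $q$ is piecewise constant, the first property gives $\int_\Omega (\Div \Pi_h \bw)\,q = \int_\Omega (\Div \bw)\,q = \|q\|^2_{L^2(\Omega)}$, and dividing by $\|\nab \Pi_h \bw\|_{L^2(\Omega)}$ produces $\beta_0 = C_\Omega^{-1}C^{-1}$.

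For the construction of $\Pi_h$ I would follow the classical Bernardi-Raugel strategy. First, apply a vector-valued Scott-Zhang type quasi-interpolant $I_h : [H^1_0(\Omega)]^2 \to \mathring{\bpol}^c_1(\mct)$, which is $H^1$-stable. Then, for each interior edge $e$ with unit normal $\bn_e$, let $\phi_e = \lambda_{z^-_e}\lambda_{z^+_e}$ be the associated quadratic edge bubble ($z^\pm_e$ being the endpoints of $e$), and define
\[
\Pi_h \bw := I_h \bw + \sum_{e\in \mcei} c_e\, \phi_e\, \bn_e, \qquad c_e := \frac{\int_e (\bw - I_h \bw)\cdot \bn_e\,ds}{\int_e \phi_e\,ds}.
\]
The constants $c_e$ are chosen precisely so that $\int_e (\Pi_h \bw - \bw)\cdot \bn_e\,ds = 0$ for every edge $e$; applying the divergence theorem on each $T$ then gives the first Fortin property $\int_T \Div(\Pi_h \bw - \bw)\,dx = \sum_{e\subset \p T}\int_e (\Pi_h \bw - \bw)\cdot \bn\,ds = 0$.

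The main obstacle is establishing $H^1$-boundedness of $\Pi_h$ with a constant independent of the shape of the elements in $\mct$. The edge-bubble seminorm $\|\nab \phi_e\|_{L^2(T)}$ degenerates like $|T|^{1/2}/\rho_T$ on sliver triangles, while $|c_e|$ scales with $|e|^{-1}$ times the jump of $\bw - I_h \bw$ across $e$. For shape-regular meshes these factors are easily controlled, but for anisotropic meshes one must verify that the dangerous powers of the shortest altitude cancel. This is exactly the content of the anisotropic interpolation and trace theory developed in \cite{ANS01,ApelNicaise04}: by choosing $I_h$ to be the anisotropic Scott-Zhang operator and using the sharp anisotropic trace inequality to bound $\int_e (\bw - I_h \bw) \cdot \bn_e\,ds$, the estimate $\|c_e \phi_e \bn_e\|_{H^1(T)} \le C\|\nab \bw\|_{L^2(\omega_T)}$ holds with $C$ depending only on the maximum angle of $\mct$. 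Summing over edges yields the desired uniform $H^1$-stability, closing the Fortin argument.
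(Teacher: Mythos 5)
The paper gives no proof of this lemma---it is quoted as the classical stability result for the $\bpol_2^c-\pol_0$ (Bernardi--Raugel type) pair from \cite{BernardiRaugel85,BoffiEtAl}---and your Fortin-criterion argument (continuous right inverse of the divergence, Scott--Zhang quasi-interpolant corrected by normal edge bubbles to match edge fluxes) is exactly the standard proof found in those references, so it is correct for the statement as written, which only asserts existence of some $\beta_0>0$ for the given mesh. One caveat on your final paragraph: the claim that the Fortin bound holds with a constant depending only on the maximum angle of $\mct$ overstates what is known---the paper addresses aspect-ratio uniformity of $\beta_0$ separately in Theorem \ref{thm:ApelResult} via \cite{ApelNicaise04}, and only for the structured patch classes described there, not under a bare maximum angle condition---but that uniformity is not part of this lemma, so it does not affect the validity of your proof.
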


%%%%%%%%%%%%%%%
%%%%%%%%%%%%%%%
%%%%%%%%%%%%%%%
\begin{lemma}[Stability on macro element]\label{lem:LocalStab}
Let $T\in \mct$.  Then there exists $\beta_{T^{ct}}>0$
such that for any $q\in \mathring{\pol}_1(T^{ct})$,
there exists a unique $\bv\in \mathring{\bpol}^c_2(T^{ct})$
such that $\Div \bv = q$ and $\|\nab \bv\|_{L^2(T)}\le \beta_{T^{ct}}^{-1} \|q\|_{L^2(T)}$.
\end{lemma}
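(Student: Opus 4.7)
The plan is to prove this macro-element divergence stability by reducing to bijectivity of $\Div:\mathring{\bpol}_2^c(T^{ct})\to\mathring{\pol}_1(T^{ct})$ between two finite-dimensional spaces of equal dimension, then verifying injectivity via a stream function argument.

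First I would count dimensions. The Lagrange degrees of freedom of $\pol_2^c(T^{ct})$ sit at the four vertices $z_0,z_1,z_2,z_3$ and at the midpoints of the six edges of $T^{ct}$ (three on $\partial T$, three interior), for a total of $10$ scalar DOFs; zeroing out the six boundary DOFs (three vertices and three midpoints on $\partial T$) yields $\dim \mathring{\bpol}_2^c(T^{ct})=2\cdot 4=8$. On the pressure side, $\pol_1(T^{ct})$ has dimension $9$ and the mean-zero constraint gives $\dim \mathring{\pol}_1(T^{ct})=8$. Integration by parts with $\bv=0$ on $\partial T$ gives $\int_T \Div \bv=0$, so $\Div$ indeed maps into $\mathring{\pol}_1(T^{ct})$. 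Because the two spaces have equal finite dimension, bijectivity of $\Div$ reduces to injectivity; bijectivity then yields the claimed bound (and the uniqueness of $\bv$) by equivalence of norms on finite-dimensional spaces, with $\beta_{T^{ct}}^{-1}$ absorbing the geometric dependence on $T^{ct}$.

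Next I would establish injectivity via a stream function. Suppose $\bv\in \mathring{\bpol}_2^c(T^{ct})$ satisfies $\Div \bv=0$. Since $T$ is simply connected and $\bv\cdot \bn=0$ on $\partial T$, there exists a scalar $\psi$ on $T$ with $\bv=(\partial_y\psi,-\partial_x\psi)$, and $\psi$ is constant on $\partial T$; normalize so $\psi=0$ on $\partial T$. Continuity of $\bv$ and its piecewise $\pol_2$ structure force $\psi\in C^1(T)\cap \pol_3(T^{ct})$, the standard Clough--Tocher $C^1$ space on $T^{ct}$. The full condition $\bv=0$ on $\partial T$ gives both $\partial_{\bt}\psi=0$ and $\partial_{\bn}\psi=0$ on $\partial T$; combined with $\psi=0$ on $\partial T$, this forces the vanishing of all twelve standard Clough--Tocher degrees of freedom, namely values and full gradients at the three vertices of $T$ (the two edges incident at each vertex span $\bbR^2$, so tangential vanishing of $\nab\psi$ gives the full gradient) and normal derivatives at the three midpoints of $\partial T$. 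Unisolvence of the Clough--Tocher element, which holds for any strictly interior split point of $T$, then forces $\psi\equiv 0$ and hence $\bv\equiv 0$.

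The main obstacle is the unisolvence step for the Clough--Tocher element on a macro triangle split at an arbitrary interior point, be it the incenter or the barycenter. This is a classical fact, but it is precisely the algebraic mechanism behind local stability of the Scott--Vogelius pair on such splits and must be invoked to conclude that the kernel of $\Div$ is trivial. Once that is in hand, the dimension match forces bijectivity of $\Div$, and elementary finite-dimensional linear algebra delivers the constant $\beta_{T^{ct}}>0$ claimed in the lemma.
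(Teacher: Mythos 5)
Your proof is correct. The paper itself does not prove this lemma from scratch --- it cites the bijectivity of $\Div:\mathring{\bpol}_2^c(T^{ct})\to\mathring{\pol}_1(T^{ct})$ from Arnold--Qin and Guzm\'an--Neilan --- and your argument is precisely the classical one behind those citations: the dimension count $8=8$ is right (four interior Lagrange nodes for the scalar $C^0$-$\pol_2$ space, giving $2\cdot 4$ vector DOFs, against $3\cdot 3-1$ pressure DOFs), the divergence-free kernel is identified with curls of $C^1$ piecewise cubics on $T^{ct}$ whose twelve Clough--Tocher DOFs all vanish, and unisolvence of the Clough--Tocher element for an arbitrary interior split point closes the injectivity step. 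Where your route differs from the machinery the paper actually builds is in what it delivers quantitatively: your finite-dimensional argument produces \emph{some} $\beta_{T^{ct}}>0$ with no control over how it depends on the geometry of $T$ and the split point, which suffices for the lemma as stated but not for the paper's main goal. The paper's Theorem~\ref{thm:SkewedStability} instead gives a \emph{constructive} right inverse of the divergence --- writing $\tilde\bw=\tilde\mu\tilde\bw_1+\tilde\mu^2\tilde\bw_0$ with $\tilde\bw_1,\tilde\bw_0$ pinned down by Brezzi--Douglas--Marini degrees of freedom on the scaled reference triangle --- precisely so that $\beta_{T^{ct}}$ can be tracked explicitly in terms of the aspect ratio and the position of the split point (barycenter versus incenter). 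So your proof is a clean and complete justification of Lemma~\ref{lem:LocalStab}, but the constructive decomposition is what the anisotropic stability estimates later in the paper actually rely on.
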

\begin{remark}
Lemma \ref{lem:LocalStab} implies there
exists $\beta_{T^{ct}}>0$ such that $\|\nab \bv\|_{L^2(T)}\le \beta_{T^{ct}}^{-1} \|\Div \bv\|_{L^2(T)}$ for all $\bv\in \mathring{\bpol}_2^c(T^{ct})$.
For the continuation of the paper, we assume that $\beta_{T^{ct}}$ is the largest
constant such that this inequality is satisfied.
\end{remark}

%%%%%%%%%%%%%%%
%%%%%%%%%%%%%%%
%%%%%%%%%%%%%%%
\begin{theorem}[Stability of SV pair]\label{thm:StdStability}
There holds 
\begin{align}\label{eqn:InfSupStatement}
\beta \|q\|_{L^2(\Omega)} \le \sup_{0\neq \bw\in \mathring{\bpol}^c_2(\mct^{ct})}  \frac{\int_\Omega (\Div \bw)q}{\|\nab \bw\|_{L^2(\Omega)}}\qquad \forall q\in \mathring{\pol}_1(\mct^{ct}),
\end{align}
with 
\[
\beta =\Big((1+\beta_0^{-1}) \beta_*^{-1}  +\beta_0^{-1}\Big)^{-1} = \frac{\beta_0 \beta_*}{\beta_*+\beta_0+1},
\]
where $\beta_0>0$ is given in Lemma \ref{lem:P2P0},
$\beta_* = \min_{T\in \mct} \beta_{T^{ct}}$, and $\beta_{T^{ct}}$ is given in Lemma \ref{lem:LocalStab}.
\end{theorem}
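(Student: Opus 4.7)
The plan is a two-scale construction on the coarse/refined pair $(\mct,\mct^{ct})$. Given $q\in \mathring{\pol}_1(\mct^{ct})$, set $q_0 := P_0 q \in \mathring{\pol}_0(\mct)$, the $L^2$-orthogonal projection of $q$ onto piecewise constants on the coarse mesh; by construction $q - q_0$ has zero mean on every $T\in \mct$. I would then handle $q_0$ using Lemma \ref{lem:P2P0} and clean up the remainder locally with Lemma \ref{lem:LocalStab}, finally taking $\bw$ to be the sum of the two velocity fields.

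Concretely, promoting the inf-sup of Lemma \ref{lem:P2P0} to a bounded right inverse (a standard closed-range/Fortin-type consequence of the inequality) yields $\bv_0\in \mathring{\bpol}_2^c(\mct)\subset \mathring{\bpol}_2^c(\mct^{ct})$ with $\int_T \Div \bv_0 = \int_T q_0$ for every $T\in \mct$ and $\|\nab \bv_0\|_{L^2(\Omega)} \le \beta_0^{-1}\|q_0\|_{L^2(\Omega)}$. Define $\tilde q := q - \Div \bv_0 \in \pol_1(\mct^{ct})$. The matching of elementwise integrals above forces $\int_T \tilde q = 0$, and hence $\tilde q|_T \in \mathring{\pol}_1(T^{ct})$. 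Lemma \ref{lem:LocalStab} applied on each $T$ returns $\bv_T\in \mathring{\bpol}_2^c(T^{ct})$ with $\Div \bv_T = \tilde q|_T$ and $\|\nab \bv_T\|_{L^2(T)} \le \beta_{T^{ct}}^{-1}\|\tilde q\|_{L^2(T)}$; since each $\bv_T$ vanishes on $\partial T$, the sum $\bv_1 := \sum_{T\in \mct}\bv_T$ lies in $\mathring{\bpol}_2^c(\mct^{ct})$ and satisfies $\Div \bv_1 = \tilde q$ and $\|\nab \bv_1\|_{L^2(\Omega)}\le \beta_*^{-1}\|\tilde q\|_{L^2(\Omega)}$.

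Setting $\bw := \bv_0 + \bv_1 \in \mathring{\bpol}_2^c(\mct^{ct})$, we have $\Div \bw = q$ by construction, so the numerator in \eqref{eqn:InfSupStatement} equals $\|q\|_{L^2(\Omega)}^2$. For the denominator, the triangle inequalities
\[
\|\tilde q\|_{L^2(\Omega)} \le \|q\|_{L^2(\Omega)} + \|\Div \bv_0\|_{L^2(\Omega)} \le (1+\beta_0^{-1})\|q\|_{L^2(\Omega)},
\]
\[
\|\nab \bw\|_{L^2(\Omega)} \le \|\nab \bv_0\|_{L^2(\Omega)} + \|\nab \bv_1\|_{L^2(\Omega)} \le \bigl[\beta_0^{-1} + (1+\beta_0^{-1})\beta_*^{-1}\bigr]\|q\|_{L^2(\Omega)}
\]
produce precisely the reciprocal of the stated $\beta$. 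The main subtlety, and the reason for solving the coarse problem \emph{first}, is the promotion of Lemma \ref{lem:P2P0} to a right inverse that matches elementwise means of $q_0$; this is what forces $\tilde q$ to satisfy the mean-zero-on-$T$ hypothesis of Lemma \ref{lem:LocalStab}, so that the two lemmas can be combined by a plain sum. Reversing the order (solve locally first, then correct with a scaled coarse piece) would instead generate a cross term $\int_\Omega(\Div \bv_0)q_1$ that is not controlled, since $\Div \bv_0|_T\in \pol_1(T)$ need not be orthogonal to the mean-zero piece of $q_1|_T\in \pol_1(T^{ct})$.
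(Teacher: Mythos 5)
Your argument is correct and yields exactly the stated constant, and it rests on the same decomposition as the paper ($\bar q=P_0q\in\mathring{\pol}_0(\mct)$ plus a remainder that is mean-zero on each $T$, treated by Lemma \ref{lem:P2P0} and Lemma \ref{lem:LocalStab} respectively), but the mechanics differ in one genuine way. The paper never constructs a coarse velocity: it first builds the local correction for $q-\bar q$ to get \eqref{eqn:Step1}, then invokes Lemma \ref{lem:P2P0} purely as an inequality, inserting $\pm q$ and Cauchy--Schwarz to get \eqref{eqn:Step2}, and chains the two bounds. You instead upgrade Lemma \ref{lem:P2P0} to a bounded right inverse matching elementwise means --- a standard closed-range consequence, but an extra step not literally contained in the lemma as stated (one should note that $P_0(\Div\bv_0)-q_0$ is a mean-zero piecewise constant orthogonal to all of $\mathring{\pol}_0(\mct)$, hence zero) --- and then produce a single explicit $\bw$ with $\Div\bw=q$, so the numerator is $\|q\|_{L^2(\Omega)}^2$ on the nose. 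Your route buys the stronger, constructive statement that $\Div:\mathring{\bpol}_2^c(\mct^{ct})\to\mathring{\pol}_1(\mct^{ct})$ is surjective with right-inverse norm $\beta^{-1}$; the paper's route buys that it uses the two lemmas exactly as stated. Two minor points: the step $\|\tilde q\|_{L^2(\Omega)}\le(1+\beta_0^{-1})\|q\|_{L^2(\Omega)}$ relies on $\|\Div\bv_0\|_{L^2(\Omega)}\le\|\nab\bv_0\|_{L^2(\Omega)}$, which holds with constant $1$ because $\bv_0$ vanishes on $\partial\Omega$ (the paper uses the same fact silently in \eqref{eqn:Step2}); and your closing observation about the order of the two solves is a constraint of your constructive strategy only --- the paper in fact handles the local part first, which is harmless there because no cross term is ever formed when one works at the level of inequalities.
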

\begin{proof}
Again the proof of this result is found in \cite[Proposition 6.1]{GuzmanNeilan18}.  We provide the proof here for completeness.

For given $q\in \mathring{\pol}_1(\mct^{ct})$, let $\bar{q}\in \mathring{\pol}_0(\mct)$ be its 
$L^2$-projection onto $\mathring{\pol}_0(\mct)$:
\[
\bar{q}|_T = \frac1{|T|} \int_T q\qquad \forall T\in \mct.
\]
Then $(q-\bar q)|_T\in \mathring{\pol}_1(T^{ct})$ for all $T\in \mct$.

By Lemma \ref{lem:LocalStab}, for each $T\in \mct$, there exists $\bv_T\in \mathring{\bpol}^c_2(T^{ct})$
satisfying $\Div \bv_T = (q-\bar q)|_T$ and $\|\nab \bv\|_{L^2(T)}\le \beta_{T^{ct}}^{-1} \|q-\bar q\|_{L^2(T)}$.
We then set $\bv\in \mathring{\bpol}^c_2(\mct^{ct})$ such that $\bv|_T = \bv_T$ for all $T\in \mct$.
Note that $\|\nab \bv\|_{L^2(\Omega)}\le \beta_*^{-1} \|q-\bar q\|_{L^2(\Omega)}$
with $\beta_* = \min_{T\in \mct} \beta_{T^{ct}}$, and therefore
\begin{align*}
\|q-\bar q\|_{L^2(\Omega)}^2 
&= \int_\Omega q(q-\bar q) = \int_\Omega (\Div \bv)q\\
& = \|\nab \bv\|_{L^2(\Omega)}  \frac{\int_\Omega (\Div \bv)q}{\|\nab \bv\|_{L^2(\Omega)}}
 \le \beta_*^{-1} \|q-\bar q\|_{L^2(\Omega)} \sup_{0\neq \bw\in \mathring{\bpol}^c_2(\mct^{ct})}  \frac{\int_\Omega (\Div \bw)q}{\|\nab \bw\|_{L^2(\Omega)}}.
\end{align*}
Thus,
\begin{align}
\label{eqn:Step1}
\|q-\bar q\|_{L^2(\Omega)}
\le \beta_*^{-1} \sup_{0\neq \bw\in \mathring{\bpol}^c_2(\mct^{ct})}  \frac{\int_\Omega (\Div \bw)q}{\|\nab \bw\|_{L^2(\Omega)}}.
\end{align}

We also have, by Lemma \ref{lem:P2P0} and the triangle and Cauchy-Schwarz inequalities,
\begin{align}\label{eqn:Step2}
\beta_0 \|\bar q\|_{L^2(\Omega)}  \le \|q-\bar q\|_{L^2(\Omega)} + \sup_{0\neq \bw\in \mathring{\bpol}^c_2(\mct^{ct})}  \frac{\int_\Omega (\Div \bw)q}{\|\nab \bw\|_{L^2(\Omega)}}.
\end{align}
Combining \eqref{eqn:Step1}--\eqref{eqn:Step2} yields
\begin{align*}
\|q\|_{L^2(\Omega)} 
&\le \|q-\bar q\|_{L^2(\Omega)} +\|\bar q\|_{L^2(\Omega)}\\
&\le (1+\beta_0^{-1}) \|q-\bar q\|_{L^2(\Omega)} + \beta_0^{-1}  \sup_{0\neq \bw\in \mathring{\bpol}^c_2(\mct^{ct})}  \frac{\int_\Omega (\Div \bw)q}{\|\nab \bw\|_{L^2(\Omega)}}\\
&\le \Big((1+\beta_0^{-1}) \beta_*^{-1}  +\beta_0^{-1}\Big)\sup_{0\neq \bw\in \mathring{\bpol}^c_2(\mct^{ct})}  \frac{\int_\Omega (\Div \bw)q}{\|\nab \bw\|_{L^2(\Omega)}}.
\end{align*}
\hfill
\end{proof}

%%%%%%%%%%%%%%%%%%%%%
%%%%%%%%%%%%%%%%%%%%%
%%%%%%%%%%%%%%%%%%%%%
\begin{remark}
The mapping ${\rm div}:\mathring{\bpol}_k^c(T^{ct})\to \mathring{\pol}_{k-1}(T^{ct})$
is surjective for all $k\ge 1$ \cite{GuzmanNeilan18}.  Therefore, the proof of Theorem \ref{thm:StdStability}
easily extends to the $\mathring{\bpol}_k^c(\mct^{ct})-\mathring{\pol}_{k-1}(\mct^{ct})$
pair for $k\ge 2$.
\end{remark}

%%%%%%%%%%%%%%%%%%%%%
%%%%%%%%%%%%%%%%%%%%%
%%%%%%%%%%%%%%%%%%%%%
\begin{remark}
Theorem \ref{thm:StdStability} shows
that the inf-sup constant $\beta$ depends on
inf-sup constants of two related problems: (1) the inf-sup
constant of the $\mathring{\bpol}_2^c(\calT_h)-\mathring{\pol}_0(\calT_h)$ pair $\beta_0$ and (2)
the local inf-sup constant $\beta_{T^{ct}}$ given in Lemma \ref{lem:LocalStab}.
These two stability constants are estimated in subsequent sections.
\end{remark}

%%%%%%%%%%%%%%%%%%%%%%%%%%%%%%%%%%%%%%
%%%%%%%%%%%%%%%%%%%%%%%%%%%%%%%%%%%%%%
\subsection{Estimates of the inf-sup stability constant $\beta_0$
for the $\mathring{\bpol}_2^c(\calT_h)-\mathring{\pol}_0(\calT_h)$ pair}\label{sec:Apel-summary}
We summarize the results in \cite{ApelNicaise04} 
which show that the inf-sup stability constant
$\beta_0$ for the $\mathring{\bpol}_2^c(\calT_h)-\pol_0(\calT_h)$
is uniformly stable (with respect to aspect ratio and mesh size) on a large class of two-dimensional
anisotropic meshes.

We assume that $\calT_h$ is a refinement
of a shape-regular, or isotropic, macrotriangulation $\calT_H$ of triangular or quadrilateral elements with
\[
\bar \Omega = \bigcup_{Q\in \calT_H} \bar Q.
\]
The restriction of the microtriangulation $\calT_h$ to a macroelement
$Q\in \calT_H$ is assumed to be a conforming triangulation of $Q$.  
These triangulations of a macroelement $Q$ (or patch) are classified into three groupings (cf.~\cite[p.92-93]{ApelNicaise04}):
\begin{enumerate}
\item {\bf Patches of isotropic elements:} The triangulation $\calT_h$ restricted to $Q$ 
consists of isotropic elements.
\item {\bf Boundary layer patches:}  All vertices of the triangulation $\calT_h$ restricted to $Q$ are contained in two edges
of $Q$. 
\item {\bf Corner patches:} Two edges with a common vertex are geometrically refined. It is assumed that
$Q$ can be partitioned into a finite number of patches $K$ of isotropic elements or of boundary layer type
such that adjacent patches have the same size.  One hanging node per side is allowed,
but with the restriction that there is an edge $e$ of some $T\in \calT_h$ that joings
the handing node with a node on the opposite side of $K$.
\end{enumerate}

%%%%%%%%%%%%%%%%
%%%%%%%%%%%%%%%%
%%%%%%%%%%%%%%%%
\begin{theorem}[Theorem 1 in \cite{ApelNicaise04}]\label{thm:ApelResult}
Suppose that
 isotropic patches, boundary layer patches, or corner patches are used.
 Then the inf-sup constant $\beta_0$ associated
 with the  $\mathring{\bpol}_2^c(\calT_h)-\mathring{\pol}_0(\calT_h)$
 pair is uniformly bounded from below with respect to the aspect ratio of $\calT_h$.
 \end{theorem}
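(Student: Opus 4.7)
The plan is to construct a Fortin operator $\Pi_h \colon \bH^1_0(\Omega) \to \mathring{\bpol}^c_2(\calT_h)$ that is (i) $H^1$-stable with a constant independent of the aspect ratio and the mesh size, and (ii) divergence-preserving against piecewise constants, i.e., $\int_T \Div(\Pi_h \bv - \bv) = 0$ for every $T \in \calT_h$. Once such a $\Pi_h$ is available, the classical Fortin argument combined with the continuous inf-sup stability of $(\bH^1_0(\Omega), L^2_0(\Omega))$ immediately yields the lower bound $\beta_0 \ge \beta_{\rm cont}/\|\Pi_h\|$, uniform in the aspect ratio.

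I would construct $\Pi_h$ in two stages. First, introduce a Scott--Zhang-type quasi-interpolant $I_h \colon \bH^1_0 \to \mathring{\bpol}^c_2(\calT_h)$ adapted to the patch structure: on each macroelement $Q \in \calT_H$, the averaging sub-simplices defining $I_h$ are chosen aligned with the direction of anisotropy so that the local $H^1$ stability constant depends only on the shape-regular geometry of $Q$ and not on the aspect ratios of the microelements inside $Q$. Second, add a Bernardi--Raugel-style edge-bubble correction $\sum_e c_e \bphi_e$, where $\bphi_e = (\lambda_i \lambda_j) \bn_e$ is supported on the pair of elements sharing $e$ and $\bn_e$ is a fixed unit normal to $e$. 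Choosing $c_e$ so that $c_e \int_e \bphi_e \cdot \bn_e = \int_e (\bv - I_h \bv)\cdot \bn_e$ and applying the divergence theorem element by element enforces property (ii).

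The main obstacle is showing that the edge-bubble correction is $H^1$-stable uniformly in the aspect ratio. For each patch type one needs sharp anisotropic scalings of the form $\|\nab \bphi_e\|_{L^2(T)}$ versus the normalization $\int_e \lambda_i\lambda_j$, together with a trace inequality controlling $\int_e (\bv - I_h\bv)\cdot\bn_e$ by $\|\nab \bv\|_{L^2(\omega_e)}$ without picking up inverse powers of $|e|/\rho_T$. Patches of isotropic elements reduce to the classical shape-regular analysis. For boundary-layer patches, the edges transverse to the anisotropy must be treated separately from the edges parallel to it, exploiting the fact that all triangles in a boundary-layer patch share a common anisotropic direction so that the long edges produce $\bphi_e$ whose gradient is large only in the short direction, where the competing factors cancel. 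Corner patches are reduced to the previous two cases by the assumed decomposition into isotropic or boundary-layer sub-patches of comparable size; the single hanging-node rule ensures that the connecting edge permits a local correction on the affected sub-patch. Since each $\bphi_e$ vanishes on $\partial Q$ and $I_h$ is constructed patch-by-patch, the macroelements do not interact, and summing the local estimates over $\calT_H$ yields the desired global bound on $\|\Pi_h\|$.
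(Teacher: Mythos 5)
First, a point of reference: the paper does not prove this statement. Theorem \ref{thm:ApelResult} is imported verbatim as Theorem 1 of \cite{ApelNicaise04}, and Section \ref{sec:Apel-summary} only records the patch hypotheses under which that external result applies. So there is no in-paper argument to compare yours against; the meaningful comparison is with Apel and Nicaise's own proof. Measured against that, your architecture is essentially theirs: a Fortin operator assembled from an anisotropy-adapted quasi-interpolant plus Bernardi--Raugel normal edge bubbles $c_e\lambda_i\lambda_j\bn_e$, with the entire difficulty concentrated in proving that the bubble correction is $H^1$-stable uniformly in the aspect ratio, patch type by patch type. The algebraic skeleton is sound: $\lambda_i\lambda_j$ vanishes on the other two edges of each triangle, so matching $\int_e \Pi_h\bv\cdot\bn_e=\int_e\bv\cdot\bn_e$ edge by edge yields $\int_T\Div(\Pi_h\bv-\bv)=0$ by the divergence theorem, and the Fortin lemma transfers the continuous inf-sup constant; boundary edges are harmless since $c_e=0$ there.

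The gap is that what you have submitted is a proof plan, not a proof: every statement that carries the actual content is asserted rather than established. Specifically, (i) that a Scott--Zhang operator can be ``aligned with the anisotropy'' so that its $H^1$-stability constant sees only the shape-regular macroelement geometry; (ii) that on boundary-layer patches the trace functional $\int_e(\bv-I_h\bv)\cdot\bn_e$ is controlled by $\|\nab\bv\|_{L^2(\omega_e)}$ with the correct anisotropic weight; and (iii) that the hanging-node edge in a corner patch admits a purely local fix --- each of these is a separate anisotropic scaling computation, and together they constitute most of \cite{ApelNicaise04}. The cancellation you invoke for the long edges is precisely the delicate point: for a thin triangle $\|\nab(\lambda_i\lambda_j)\|_{L^2(T)}$ blows up like the reciprocal of the short altitude, and the claim that this is exactly offset by the smallness of the normal-component trace (because $\bn_e$ for a long edge points in the short direction) must be written out with the explicit constants; it is where an incautious choice of $I_h$ or of the bubble normalization would silently pick up a factor of the aspect ratio. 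In short: right approach, no step that would fail in principle, but the uniform-in-aspect-ratio estimates that make the theorem true are named rather than proved.
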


%%%%%%%%%%%%%%%%%%%%%%%%%%%%%%%%%%%%%%
%%%%%%%%%%%%%%%%%%%%%%%%%%%%%%%%%%%%%%
\subsection{Estimates of inf-sup stability constant $\beta_{T^{ct}}$}

To estimate the local stability constant $\beta_{T^{ct}}$,
we first map $T$ to a ``scaled reference triangle'' (under
the assumption that $T$ satisfies a large angle condition).
The following lemma is a minor modification
of \cite[Theorem 2.2]{AcostaEtAl}.
For completeness, we provide the proof of the result in the appendix.
%%%%%%%%%%%%%%%%
%%%%%%%%%%%%%%%%
%%%%%%%%%%%%%%%%
\begin{lemma}\label{mapisgood}
Let $T$ satisfy $LAC(\delta)$ and have edge lengths $h_1,h_2$, and $h_3$ (with the convention $h_1\le h_2\le h_3$).  Then there exists a triangle $\tilde{T}$ with vertices $\tilde z_3:=(0,0),\tilde z_2:=(h_1,0),\tilde z_1:=(0,h_2)$ that can be mapped to $T$ by an affine bijection $\tilde F_{T}(\tilde x):=A\tilde{x} + b$ where $\|A\|,\|A^{-1}\|\leq C(\delta)$, where $C(\delta)$ depends only on $\delta$, in particular, the constant is independent of the aspect ratio and size of $T$. 
\end{lemma}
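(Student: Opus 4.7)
\medskip
\noindent \textbf{Proof proposal.} The plan is to write down the affine map $\tilde F_T$ explicitly in terms of the vertices of $T$ and then estimate $\|A\|$ and $\|A^{-1}\|$ by computing the eigenvalues of $A^T A$, which will turn out to depend only on the angle of $T$ at the vertex $z_3$. The large angle condition will then be used to keep these eigenvalues bounded away from zero.

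First I would set $b := z_3$ and define $A\in \mathbb{R}^{2\times 2}$ to be the matrix whose columns are the unit vectors $(z_2-z_3)/h_1$ and $(z_1-z_3)/h_2$; note that these are unit vectors since by the labeling convention $|z_2-z_3|=h_1$ and $|z_1-z_3|=h_2$. One checks directly that $\tilde F_T(\tilde z_i)=z_i$ for $i=1,2,3$, so $\tilde F_T$ is the desired affine bijection provided that $A$ is invertible.

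Next I would observe that, because the two columns of $A$ are unit vectors meeting at the angle $\alpha_3$ of $T$ at vertex $z_3$, a direct computation gives
\begin{equation*}
A^T A = \begin{pmatrix} 1 & \cos\alpha_3 \\ \cos\alpha_3 & 1 \end{pmatrix},
\end{equation*}
whose eigenvalues are $1 \pm \cos\alpha_3$. Therefore
\begin{equation*}
\|A\|^2 = 1+|\cos\alpha_3|,\qquad \|A^{-1}\|^2 = \frac{1}{1-|\cos\alpha_3|},
\end{equation*}
so it suffices to bound $|\cos\alpha_3|$ away from $1$ by a quantity depending only on $\delta$.

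The key (and only delicate) step is this bound, which I would derive by exploiting both the ordering $\alpha_1\le\alpha_2\le\alpha_3$ and the large angle condition. On the one hand, since $\alpha_3$ is the largest angle of $T$ and $\alpha_1+\alpha_2+\alpha_3=\pi$, one has $\alpha_3\ge \pi/3$, so $\cos\alpha_3\le 1/2$; on the other hand, $LAC(\delta)$ gives $\alpha_3<\pi-\delta$, so $\cos\alpha_3>-\cos\delta$. Combining these yields $|\cos\alpha_3|\le \max(1/2,\cos\delta)<1$, a bound depending only on $\delta$, from which the desired estimates $\|A\|,\|A^{-1}\|\le C(\delta)$ follow. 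I expect this final angle-chasing step to be the main (though still mild) obstacle, essentially because one must invoke the two different constraints on $\alpha_3$ to control $\cos\alpha_3$ from above and from below separately.
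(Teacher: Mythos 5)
Your proof is correct and follows essentially the same route as the paper's: the identical affine map (with $b=z_3$ and the columns of $A$ being the unit vectors along the two edges meeting at $z_3$), and the identical angle argument combining $\alpha_3\ge\pi/3$ with $\alpha_3<\pi-\delta$. The only difference is cosmetic: you compute $\|A\|$ and $\|A^{-1}\|$ exactly via the eigenvalues $1\pm\cos\alpha_3$ of $A^{T}A$, whereas the paper bounds the entries of $A$ and its adjugate and uses $|\det A|=\sin\alpha_3\ge\sin\delta$; both yield the same conclusion.
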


Lemma \ref{mapisgood}
implies that
it is sufficient to estimate $\beta_{T^{ct}}$ in the case $T = \tilde T$.
{Indeed, for} given $\bw\in \mathring{\bpol}_2^c(T^{ct})$,
let $\tilde \bw:\tilde T\to \mathbb{R}^2$ be given
via a scaled Piola transform:
\[
\bw(x) = {D\tilde F_T \hat \bw(\tilde x)}\qquad x = \tilde F_T(\tilde x).
\]
We then have $\tilde \bw\in \mathring{\bpol}_2^c(\tilde T_T^{ct})$,
where $\tilde T_T^{ct}$ is the Clough-Tocher partition of $\tilde T$ induced by $\tilde F_T$, i.e.,
\[
\tilde T^{ct}_T = \{\tilde K_i = \tilde F_T^{-1}(K_i):\ K_i\in T^{ct}\}.
\]
By the chain rule, there holds
\begin{align*}
\nab  \bw(x) = D\tilde F_T  \tilde \nab \tilde \bw(\tilde x)D\tilde F_T^{-1},\qquad
\Div \bw(x) = \widetilde \Div \tilde \bw(\tilde x).
\end{align*}
Making a change of variables, and applying Lemma \ref{lem:LocalStab} on $\tilde T^{ct}_T$, we compute
\begin{align*}
\|\tilde \nab \tilde \bw\|_{L^2(\tilde T)}^2 
&\le |\det(D\tilde F_T)| |D\tilde F_T|^2 |D\tilde F_T^{-1}|^2 \|\tilde \nab \tilde \bw\|_{L^2(\tilde T)}^2\\
&\le  \beta_{\tilde T^{ct}_T}^{-2} |\det(D\tilde F_T)| |D\tilde F_T|^2 |D \tilde F_T^{-1}|^2 \|\widetilde \Div \tilde \bw\|_{L^2(\tilde T)}^2\\
&\le  \beta_{\tilde T^{ct}_T}^{-2}  |D\tilde F_T|^2 |D\tilde F_T^{-1}|^2 \| \Div  \bw\|_{L^2( T)}^2.
\end{align*}
Thus, we conclude from Lemma \ref{mapisgood} that
\begin{align*}
\|\nab \bw\|_{L^2(T)}\le  \beta_{\tilde T_T^{ct}}^{-1}   |DF_T| |DF_T^{-1}|  \| \Div  \bw\|_{L^2( T)}\le C \beta_{\tilde T^{ct}_T}^{-1} \|\Div \bw\|_{L^2(T)}.
\end{align*}
The goal of this section then is to estimate $\beta_{\tilde T_T^{ct}}$, i.e.,
to explicitly estimate the stability result stated in Lemma \ref{lem:LocalStab}
in the case $T=\tilde T$.  Of particular interest is the case where the split point $z_0$ is not affine invariant {(e.g., the incenter)},
and therefore standard scaling arguments are not immediately applicable.
To this end, we derive such an estimate by adopting a constructive stability proof 
of the $\mathring{\bpol}_2(\tilde T_T^{ct})-\mathring{\pol}_1(\tilde T_T^{ct})$
pair given in \cite{GuzmanNeilan18}.
The argument is quite involved and requires some additional notation 
and technical lemmas.

First, the mapping $\tilde F_T$ in Lemma \ref{mapisgood}
satisfies $\tilde F_T(\tilde z_i) = z_i$.
Adopting the notation presented in Section \ref{sec-prelims}, we denote the edges
of $\tilde T$ as $\{\tilde e_i\}_{i=1}^3$, labeled such that
$\tilde e_i$ is opposite $\tilde z_i$.
The lengths of the edges of $\tilde T$ are
$\tilde h_1 := h_1 = |\tilde e_1|$, $\tilde h_2 := h_2 = |\tilde e_2|$,
and $\tilde h_3  := |\tilde e_3| = (h_1^2+h^2_2)^{1/2}$.
The labeling assumptions stated
in Section \ref{sec-prelims} implies $\tilde h_1\le \tilde h_2\le \tilde h_3$.

We set $\vec{k} = (\tilde k_2,\tilde k_1)^\intercal = \tilde F_T(z_{0})\in \bbR^2$ 
to be the image of the split point of $T$ onto $\tilde T$.
The notational convention is chosen so that
the altitude of $\tilde K_i$ with respect
to $\tilde e_i$ is $\tilde k_i$ for $i=1,2$.
We also set $\tilde k_3$ to be the altitude of $\tilde K_3$
with respect to $\tilde e_3$.

The main result of this section is summarized in the following theorem. 
 %%%%%%%%%%%%%%%%%%%%%% 
%%%%%%%%%%%%%%%%%%%%%% 
%%%%%%%%%%%%%%%%%%%%%% 
 \begin{theorem}\label{thm:SkewedStability}
 Let  $\tilde \mu\in \mathring{\pol}_1^c(\tilde T^{ct}_T)$ be the hat function 
associated with the split point $(\tilde k_2,\tilde k_1)^\intercal$
and set  $\tilde \asp =\frac{\tilde h_2}{\tilde h_1}$.
%Suppose that $\tilde k_j\le C \tilde h_i$ for all $i,j\in \{1,2,3\}$.
Then there holds, for all $\tilde \bw\in \mathring{\bpol}_2^c(\tilde T^{ct}_T)$,
\[
|\tilde  \bw|_{H^1( \tilde T)}\le C {\tilde \varrho}^{1/2}(1+ |\tilde \mu|_{H^1(\tilde T)}) \|\widetilde{\Div} \tilde \bw\|_{L^2(\tilde T)}.
\]
In particular, there holds $\beta_{\tilde T_{ct}^T} \ge  C \Big(\sqrt{\tilde \asp} (1+|\tilde \mu|_{H^1(\tilde T)})\Big)^{-1}$.
\end{theorem}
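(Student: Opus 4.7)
The plan is to prove the theorem in its constructive primal form: for each $\tilde q\in \mathring{\pol}_1(\tilde T^{ct}_T)$, build $\tilde\bv\in \mathring{\bpol}_2^c(\tilde T^{ct}_T)$ with $\widetilde{\Div}\tilde\bv = \tilde q$ and
\[
|\tilde\bv|_{H^1(\tilde T)} \le C\tilde\varrho^{1/2}(1+|\tilde\mu|_{H^1(\tilde T)})\|\tilde q\|_{L^2(\tilde T)}.
\]
Since Lemma \ref{lem:LocalStab} guarantees that the divergence is a bijection between the matched eight-dimensional spaces $\mathring{\bpol}_2^c(\tilde T^{ct}_T)$ and $\mathring{\pol}_1(\tilde T^{ct}_T)$, applying the construction with $\tilde q := \widetilde{\Div}\tilde\bw$ for any $\tilde\bw\in \mathring{\bpol}_2^c(\tilde T^{ct}_T)$ yields the stated estimate on $\tilde\bw$. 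The construction follows the blueprint of \cite{GuzmanNeilan18}, but with explicit tracking of all geometry-dependent constants.

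The natural decomposition splits the velocity space as $\mathring{\bpol}_2^c(\tilde T^{ct}_T) = V_\mu \oplus V_e$, where $V_\mu := \mathrm{span}(\tilde\mu\,\bm e_1,\tilde\mu\,\bm e_2)$ carries the interior-vertex degrees of freedom and $V_e$ is the six-dimensional span of the $P_2$ edge-bubbles at the midpoints of the three interior edges of $\tilde T^{ct}_T$. Correspondingly, split the pressure as $\tilde q = \bar q + q'$, where $\bar q\in \mathring{\pol}_0(\tilde T^{ct}_T)$ takes the mean of $\tilde q$ on each $\tilde K_i$ and $q'$ has zero mean on each subtriangle. Writing $\tilde\bv = \tilde\bv_\mu+\tilde\bv_e$ with $\tilde\bv_\mu\in V_\mu$ and $\tilde\bv_e\in V_e$, and noting that $\widetilde{\Div}\tilde\bv_\mu\in \mathring{\pol}_0(\tilde T^{ct}_T)$, the equation $\widetilde{\Div}\tilde\bv = \tilde q$ projects onto the zero-mean-per-subtriangle subspace as a six-dimensional system $P_\perp\widetilde{\Div}\tilde\bv_e = q'$ for $\tilde\bv_e$. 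Once $\tilde\bv_e$ is in hand, $\tilde\bv_\mu$ is determined by $\widetilde{\Div}\tilde\bv_\mu = \bar q - P_{\mathring{\pol}_0}\widetilde{\Div}\tilde\bv_e$, a two-dimensional system that is solvable because $\{\partial_1\tilde\mu,\partial_2\tilde\mu\}$ can be shown to form a basis of the two-dimensional $\mathring{\pol}_0(\tilde T^{ct}_T)$ by a direct computation from the coordinates of $\vec k$.

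The bound is then assembled as follows. A direct computation using the coordinates of $\tilde T$ and of $\vec k$ produces the matrix of the six-dimensional edge-bubble system, and one shows that after factoring out the natural bubble rescaling its condition number depends only on the large-angle parameter $\delta$ from Lemma \ref{mapisgood}. Standard inverse inequalities on the edge bubbles then give $|\tilde\bv_e|_{H^1(\tilde T)}\le C\tilde\varrho^{1/2}\|q'\|_{L^2(\tilde T)}$, where the $\tilde\varrho^{1/2}$ factor arises from the ratio of the longest edge $\tilde h_2$ of $\tilde T$ to the smallest subtriangle altitude $\tilde k_i$. The second step yields $|\tilde\bv_\mu|_{H^1(\tilde T)}\le C|\tilde\mu|_{H^1(\tilde T)}\bigl(\|\bar q\|_{L^2(\tilde T)}+|\tilde\bv_e|_{H^1(\tilde T)}\bigr)$ via the explicit formula for the $\partial_j\tilde\mu$ basis, and combining with $\|\bar q\|_{L^2(\tilde T)},\|q'\|_{L^2(\tilde T)}\le \|\tilde q\|_{L^2(\tilde T)}$ and the triangle inequality produces the claimed estimate.

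The main obstacle will be verifying the conditioning of the six-by-six edge-bubble system. Writing the matrix out explicitly in the coordinates of $\vec k$ and showing that its inverse is bounded in terms of $\delta$ alone (after bubble rescaling) is where the argument essentially departs from the shape-regular case and where the geometric interplay between the interior edges and the subtriangle altitudes $\tilde k_i$ enters. Once this is settled, the aspect-ratio dependence is confined to the inverse inequality applied to each bubble, and the hat-function step is routine since $\partial_j\tilde\mu$ are explicit piecewise constants determined by $\vec k$.
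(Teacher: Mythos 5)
Your overall architecture (construct $\tilde\bv$ with $\widetilde{\Div}\tilde\bv=\tilde q$ and use the bijectivity from Lemma \ref{lem:LocalStab} to transfer the bound to arbitrary $\tilde\bw$) is the same as the paper's, but your decomposition is genuinely different: you split the velocity by nodal type into $V_\mu\oplus V_e$ (interior vertex plus interior edge bubbles) and the pressure into subtriangle means plus zero-mean remainders, whereas the paper writes $\tilde\bw=\tilde\mu\tilde\bw_1+\tilde\mu^2\tilde\bw_0$ with $\tilde\bw_1\in\bpol_1(\tilde T)$, $\tilde\bw_0\in\bpol_0(\tilde T)$ \emph{global} polynomials fixed by Brezzi--Douglas--Marini degrees of freedom on the outer edges, weighted by the altitudes $\tilde k_i$. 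The point of the paper's choice is that the only quantitative inputs are then a BDM scaling estimate on the anisotropic triangle $\tilde T$ itself (Lemma \ref{lem:BDMScaling}), whose constant involves only $\tilde\varrho$, and the trace inequality of Lemma \ref{lem:Tracey}; all dependence on the split point is collected transparently into the factors $\tilde k_i$ and hence into $|\tilde\mu|_{H^1(\tilde T)}$.

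There is, however, a genuine gap at the step you yourself flag as the main obstacle, and I do not believe it can be closed in the form you state it. You claim that, after a ``natural bubble rescaling,'' the $6\times 6$ system $P_\perp\widetilde{\Div}:V_e\to\{q':\int_{\tilde K_i}q'=0\}$ has condition number depending only on $\delta$, so that $|\tilde\bv_e|_{H^1(\tilde T)}\le C(\delta)\,\tilde\varrho^{1/2}\|q'\|_{L^2(\tilde T)}$ with no dependence on the split point. But the edge bubbles live on the subtriangles $\tilde K_i$, whose shapes degenerate as the split point approaches $\p\tilde T$ (i.e.\ as some $\tilde k_i/\tilde h_i\to 0$) \emph{independently} of $\delta$ and of $\tilde\varrho=\tilde h_2/\tilde h_1$; both the entries of your divergence matrix (gradients of subtriangle barycentric coordinates scale like $\tilde k_i^{-1}$) and the inverse inequalities you invoke then blow up like $\tilde h_i/\tilde k_i$, which is exactly the quantity $|\tilde\mu|_{H^1(\tilde T)}^2=\frac12\sum_i\tilde h_i/\tilde k_i$ appearing in the theorem — not $\tilde\varrho$. (You also identify ``the ratio of the longest edge $\tilde h_2$ to the smallest subtriangle altitude $\tilde k_i$'' with $\tilde\varrho^{1/2}$; these are different quantities, e.g.\ $\tilde h_2/\tilde k_1$ is unbounded for split points near $\tilde e_1$ even on the unit isotropic triangle.) Your bookkeeping places \emph{all} split-point dependence in the two-dimensional $V_\mu$ step, but the $V_e$ step carries split-point dependence of the same order, so the claimed intermediate estimate is false as stated and the $6\times 6$ conditioning computation cannot come out $\delta$-only. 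The final bound may well survive if you track the $\tilde h_i/\tilde k_i$ factors through the edge-bubble system honestly, but that computation — the entire content of the theorem in the non-affine-invariant (incenter) case — is missing; this is precisely the work the paper's BDM construction and Lemmas \ref{lem:BDMScaling}--\ref{lem:Tracey} are designed to avoid.
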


To prove Theorem  \ref{thm:SkewedStability} we require
two scaling results 
whose proofs are given in the appendix.
%%%%%%%%%%%%%%%%%%%%
%%%%%%%%%%%%%%%%%%%%
%%%%%%%%%%%%%%%%%%%%
\begin{lemma}\label{lem:BDMScaling}
Set $\tilde \asp = \frac{\tilde h_2}{\tilde h_1}$.
Then for  $\tilde \bv\in \bpol_1(\tilde T)$,
there holds
\begin{align*}
\| \nab  \tilde \bv\|_{L^2(\tilde T)}^2 +\|\tilde \bv\|_{L^\infty(\tilde T)}^2 &\le C\tilde \varrho^{} |\tilde T|^{-1}  \sum_{i=1}^3 \tilde h_i \| \tilde \bv \cdot  \tilde\bn_i\|_{L^2(\tilde e_i)}^2.
%
%\| \bv\|_{L^\infty(\tilde T)}^2  
%&\le  C  |\tilde T|^{-1} \varrho \sum_{i=1}^3 h_i \| \bv \cdot  \bn_i\|_{L^2( e_i)}^2.
\end{align*}
\end{lemma}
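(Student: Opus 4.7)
The plan is a scaling argument to the unit right triangle $\hat T$ with vertices $\hat z_3=(0,0)$, $\hat z_2=(1,0)$, $\hat z_1=(0,1)$, followed by a finite-dimensional norm-equivalence argument on $\bpol_1(\hat T)$ whose constants depend on the parameter $\tilde\varrho$. Introduce the diagonal affine map $G(\hat x):=B\hat x$ with $B=\mathrm{diag}(\tilde h_1,\tilde h_2)$, so $G(\hat T)=\tilde T$, and set $\hat\bv(\hat x):=\tilde\bv(B\hat x)$, which again lies in $\bpol_1(\hat T)$. Since $\partial_{\tilde x_j}\tilde v_i=\tilde h_j^{-1}\partial_{\hat x_j}\hat v_i$ and $|\det B|=\tilde h_1\tilde h_2=2|\tilde T|$, a direct change of variables yields $\|\tilde\bv\|_{L^\infty(\tilde T)}=\|\hat\bv\|_{L^\infty(\hat T)}$ and
\[
\|\nabla\tilde\bv\|^2_{L^2(\tilde T)}=\tilde\varrho\,\|\partial_{\hat x_1}\hat\bv\|^2_{L^2(\hat T)}+\tilde\varrho^{-1}\,\|\partial_{\hat x_2}\hat\bv\|^2_{L^2(\hat T)}.
\]

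For the boundary terms, the outward unit normals are $\tilde\bn_1=(0,-1)$, $\tilde\bn_2=(-1,0)$, and $\tilde\bn_3=(\tilde h_2,\tilde h_1)/\tilde h_3$; simple edgewise parameterizations then give $\tilde h_1\|\tilde\bv\cdot\tilde\bn_1\|^2_{L^2(\tilde e_1)}=\tilde h_1^2\|\hat v_2\|^2_{L^2(\hat e_1)}$, $\tilde h_2\|\tilde\bv\cdot\tilde\bn_2\|^2_{L^2(\tilde e_2)}=\tilde h_2^2\|\hat v_1\|^2_{L^2(\hat e_2)}$, and $\tilde h_3\|\tilde\bv\cdot\tilde\bn_3\|^2_{L^2(\tilde e_3)}$ equal, up to a universal constant, to $\tilde h_1^2\|\tilde\varrho\,\hat v_1+\hat v_2\|^2_{L^2(\hat e_3)}$. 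Multiplying by $\tilde\varrho/|\tilde T|=2\tilde\varrho/(\tilde h_1\tilde h_2)$ collapses the right-hand side of the lemma, again up to a universal constant, to
\[
\|\hat v_2\|^2_{L^2(\hat e_1)}+\tilde\varrho^2\|\hat v_1\|^2_{L^2(\hat e_2)}+\|\tilde\varrho\,\hat v_1+\hat v_2\|^2_{L^2(\hat e_3)}.
\]

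The lemma is thus reduced to the finite-dimensional inequality
\[
\tilde\varrho\|\partial_{\hat x_1}\hat\bv\|^2_{L^2(\hat T)}+\tilde\varrho^{-1}\|\partial_{\hat x_2}\hat\bv\|^2_{L^2(\hat T)}+\|\hat\bv\|^2_{L^\infty(\hat T)}\le C\Big[\|\hat v_2\|^2_{L^2(\hat e_1)}+\tilde\varrho^2\|\hat v_1\|^2_{L^2(\hat e_2)}+\|\tilde\varrho\,\hat v_1+\hat v_2\|^2_{L^2(\hat e_3)}\Big]
\]
for all $\hat\bv\in\bpol_1(\hat T)$ and all $\tilde\varrho\ge 1$, with $C$ independent of $\tilde\varrho$. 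To prove this I would write $\hat v_i=a_i+b_i\hat x_1+c_i\hat x_2$ ($i=1,2$) and treat both sides as quadratic forms in the six coefficients. The three linear functionals $\hat v_1|_{\hat e_2}$, $\hat v_2|_{\hat e_1}$, $(\tilde\varrho\hat v_1+\hat v_2)|_{\hat e_3}$ form a unisolvent set on $\bpol_1(\hat T)$ for every $\tilde\varrho\ge 1$: setting each to zero forces $\hat v_1=\alpha\hat x_1$ and $\hat v_2=\beta\hat x_2$, and then requiring $\tilde\varrho\alpha\hat x_1+\beta(1-\hat x_1)\equiv 0$ on $\hat e_3$ gives $\alpha=\beta=0$. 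A $\tilde\varrho$-parametrized dual basis therefore exists, and the proof reduces to estimating its $L^\infty$ norm and its weighted gradient components directly.

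The main obstacle is controlling the $\tilde\varrho$-dependence of $C$ as $\tilde\varrho\to\infty$: the functional $(\tilde\varrho\hat v_1+\hat v_2)|_{\hat e_3}$ becomes dominated by the $\tilde\varrho\hat v_1$ term and loses sensitivity to $\hat v_2$ on the hypotenuse, so one must verify that the $\tilde\varrho^{-1}$ weighting of $\partial_{\hat x_2}\hat\bv$ on the left exactly compensates this degeneracy. I expect the cleanest route is an explicit construction of the $\tilde\varrho$-dependent dual basis with careful bookkeeping of the $\tilde\varrho$ factors; alternatively one can split into the regimes $\tilde\varrho=O(1)$, where the standard norm-equivalence on the fixed space $\bpol_1(\hat T)$ applies, and $\tilde\varrho\gg 1$, where only the leading-order asymptotics in $\tilde\varrho$ need be tracked.
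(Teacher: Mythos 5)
Your reduction to the reference triangle $\hat T$ is carried out correctly, but it takes a genuinely different route from the paper, and the difference is where the trouble lies. The paper pulls $\tilde\bv$ back by the \emph{Piola} transform $\tilde \bv = (\det D\tilde F)^{-1} D\tilde F\, \hat\bv$ rather than by plain composition. Under Piola each normal trace transforms edge-by-edge without mixing components, $\hat\bv\cdot\hat\bn_i = (\tilde h_i/|\hat e_i|)\,\tilde\bv\cdot\tilde\bn_i$, so the reference-element step is the ordinary BDM norm equivalence with a universal constant, and all aspect-ratio dependence enters only through the explicit Jacobian factors multiplying $\nab\tilde\bv$ and $\|\tilde\bv\|_{L^\infty}$. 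Your componentwise pullback instead produces the mixed functional $(\tilde\varrho\,\hat v_1+\hat v_2)|_{\hat e_3}$ on the hypotenuse, and the entire burden of the proof is shifted onto a $\tilde\varrho$-parametrized norm equivalence whose uniformity you explicitly leave open. That is a genuine gap: unisolvence of your three edge functionals yields a constant for each fixed $\tilde\varrho$, but says nothing about its behaviour as $\tilde\varrho\to\infty$, and neither of your two suggested strategies is executed.

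Moreover, the reduced inequality you would need is false, so no bookkeeping of the dual basis can close the gap. Take $\hat v_1=0$, $\hat v_2=\hat x_1$, i.e., $\tilde\bv=(0,\tilde x_1/\tilde h_1)$ on $\tilde T$. Then $\tilde\varrho\,\|\partial_{\hat x_1}\hat\bv\|_{L^2(\hat T)}^2=\tilde\varrho/2$ on the left, while the right-hand side of your reduced inequality is bounded independently of $\tilde\varrho$. Since your change of variables is an exact rewriting of the lemma, this is in fact a counterexample to the statement itself: for this $\tilde\bv$ one computes $\|\nab\tilde\bv\|_{L^2(\tilde T)}^2=\tilde\varrho/2$ and $\|\tilde\bv\|^2_{L^\infty(\tilde T)}=1$, whereas $\tilde\varrho\,|\tilde T|^{-1}\sum_{i}\tilde h_i\|\tilde\bv\cdot\tilde\bn_i\|^2_{L^2(\tilde e_i)}=\tfrac{2}{\tilde h_1^2}\cdot\tfrac{2\tilde h_1^2}{3}=\tfrac43$. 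The gradient term genuinely requires $\tilde\varrho^2$, not $\tilde\varrho$; the paper's own proof obscures this at the step where the transformed gradient $(h_1h_2)^{-1}D\tilde F\,\hat\nab\hat\bv\,(D\tilde F)^{-1}$ is bounded using $\max\{h_1/h_2,\,h_2/h_1\}$, although squaring the off-diagonal entries actually produces $\max\{(h_1/h_2)^2,(h_2/h_1)^2\}=\tilde\varrho^2$. With $\tilde\varrho^2$ in place of $\tilde\varrho$ the Piola route gives the result immediately; your route would still require the explicit uniform-in-$\tilde\varrho$ dual-basis estimate, which is considerably more work and, as written, is not done.
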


 %%%%%%%%%%%%%%%%%%%%%% 
%%%%%%%%%%%%%%%%%%%%%% 
%%%%%%%%%%%%%%%%%%%%%% 
 \begin{lemma}\label{lem:Tracey}
 For any $\tilde q\in \pol_1(\tilde K_i)$, there holds
 \[
 \tilde k_i \|\tilde q\|_{L^2(\tilde e_i)}^2 \le C \|\tilde q\|_{L^2(\tilde K_i)}^2.
 \]
 \end{lemma}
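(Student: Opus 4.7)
\medskip

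\noindent\textbf{Plan for Lemma \ref{lem:Tracey}.}
The statement is an anisotropic trace/inverse inequality: the constant on the right must be independent of both the edge length $\tilde h_i$ and of the location of the opposite vertex, and the $\tilde k_i$ on the left must exactly match the altitude of $\tilde K_i$ over $\tilde e_i$. A standard scaling to a unit reference triangle via a single similarity would introduce shape‑regularity constants, so the key point is to choose an affine map that absorbs the two independent scales---the edge length $\tilde h_i$ and the altitude $\tilde k_i$---into the two separate coordinate directions.

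\smallskip
\noindent\emph{Step 1: Normalize the geometry.} After a rigid motion (which does not change any $L^2$ norm), I place $\tilde K_i$ so that the edge $\tilde e_i$ lies on the segment from $(0,0)$ to $(\tilde h_i,0)$ and the third vertex sits at $(a,\tilde k_i)$ for some $a\in\mathbb R$. The value of $a$ can be arbitrary (this is where anisotropy hides), and it is critical that the final constant not depend on $a$.

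\smallskip
\noindent\emph{Step 2: Anisotropic reference map.} Let $\hat K$ be the standard reference triangle with vertices $(0,0),(1,0),(0,1)$, and let $\hat e$ be the edge from $(0,0)$ to $(1,0)$. Define the affine bijection $\Phi:\hat K\to \tilde K_i$ by
\[
\Phi(\hat x,\hat y)=\begin{pmatrix}\tilde h_i & a\\ 0 & \tilde k_i\end{pmatrix}\begin{pmatrix}\hat x\\ \hat y\end{pmatrix},
\]
so that $\Phi(\hat e)=\tilde e_i$ and $|\det D\Phi|=\tilde h_i\tilde k_i$. For $\tilde q\in\pol_1(\tilde K_i)$, set $\hat q:=\tilde q\circ\Phi\in\pol_1(\hat K)$.

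\smallskip
\noindent\emph{Step 3: Change of variables.} Directly,
\[
\|\tilde q\|_{L^2(\tilde K_i)}^2 = \tilde h_i\tilde k_i\,\|\hat q\|_{L^2(\hat K)}^2,\qquad
\|\tilde q\|_{L^2(\tilde e_i)}^2 = \tilde h_i\,\|\hat q\|_{L^2(\hat e)}^2,
\]
where the second identity uses that the restriction of $\Phi$ to $\hat e$ is the one‑dimensional map $(\hat x,0)\mapsto(\tilde h_i\hat x,0)$ with Jacobian $\tilde h_i$. The skew entry $a$ does not appear in either identity.

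\smallskip
\noindent\emph{Step 4: Finite‑dimensional estimate on $\hat K$.} On the fixed reference triangle, $\|\cdot\|_{L^2(\hat K)}$ is a norm on the finite‑dimensional space $\pol_1(\hat K)$ and $\|\cdot\|_{L^2(\hat e)}$ is a continuous seminorm, so there exists an absolute constant $C$ with
\[
\|\hat q\|_{L^2(\hat e)}^2 \le C\,\|\hat q\|_{L^2(\hat K)}^2\qquad\forall\,\hat q\in\pol_1(\hat K).
\]
Combining this with the identities in Step 3 gives
\[
\|\tilde q\|_{L^2(\tilde e_i)}^2 = \tilde h_i\,\|\hat q\|_{L^2(\hat e)}^2 \le C\tilde h_i\,\|\hat q\|_{L^2(\hat K)}^2 = \frac{C}{\tilde k_i}\,\|\tilde q\|_{L^2(\tilde K_i)}^2,
\]
which, after multiplying by $\tilde k_i$, is the claimed inequality.

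\smallskip
\noindent\emph{Main obstacle.} The only subtlety is the choice of reference map: the naive scaling by a shape‑regular affine equivalence would inject an aspect‑ratio factor, and a purely diagonal scaling would not apply because $\tilde K_i$ need not be a right triangle with legs aligned along $\tilde e_i$ and its altitude. The remedy is to use an upper triangular $D\Phi$ that leaves the $\hat x$‑axis edge undistorted and packs the geometric information into $\tilde h_i$ and $\tilde k_i$ only; the horizontal offset $a$ of the apex lives in the off‑diagonal entry and drops out of every relevant change‑of‑variables computation.
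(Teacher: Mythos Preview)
Your proof is correct. It differs from the paper's argument, which routes through $L^\infty$:
\[
\tilde k_i\|\tilde q\|_{L^2(\tilde e_i)}^2 \le \tilde h_i\tilde k_i\|\tilde q\|_{L^\infty(\tilde e_i)}^2 \le \tilde h_i\tilde k_i\|\tilde q\|_{L^\infty(\tilde K_i)}^2 \le C\,\tilde h_i\tilde k_i\,|\tilde K_i|^{-1}\|\tilde q\|_{L^2(\tilde K_i)}^2 = 2C\,\|\tilde q\|_{L^2(\tilde K_i)}^2,
\]
where the penultimate step is the shape-independent inverse estimate $\|p\|_{L^\infty(K)}^2\le C|K|^{-1}\|p\|_{L^2(K)}^2$ for polynomials of fixed degree, and the last equality uses $2|\tilde K_i|=\tilde h_i\tilde k_i$. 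The $L^\infty$ detour is convenient because the sup norm is affine-invariant, so no map has to be written down and the shape of $\tilde K_i$ never enters explicitly. Your route instead builds an explicit anisotropic affine map and tracks the $L^2$ Jacobians directly; the point you correctly isolate---that the skew entry $a$ drops out because it affects neither $\det D\Phi$ nor the restriction of $\Phi$ to $\hat e$---is precisely what the paper's $L^\infty$ trick sidesteps implicitly. Both arguments give a constant depending only on the polynomial degree; yours makes the anisotropic mechanism transparent, while the paper's is a two-line calculation.
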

 
\begin{myproof}{Theorem \ref{thm:SkewedStability}}

The main idea of the proof 
is to write $ \tilde \bw =  \tilde \mu  \tilde \bw_1+ \tilde \mu^2  \tilde \bw_0$,
where $ \tilde \bw_j\in \bpol_j(\tilde  T)$ are specified
by  Brezzi-Douglas-Marini degrees of freedom (DOFs).
This decomposition of $\tilde  \bw$ is unique.

{\em Step 1: Construction of $ \tilde \bw_1$:}\\
Set $ \tilde q :=  \widetilde{\Div}  \tilde \bw \in \mathring{\pol}_1(\tilde T^{ct})$, and define $ \tilde \bw_1\in \bpol_1( \tilde T)$
uniquely by the DOFs
\[
 \int_{ \tilde e_i} (\tilde \bw_1\cdot  \tilde \bn_i) \tilde \kappa = - \tilde k_i \int_{ \tilde e_i }   \tilde q  \tilde \kappa \qquad \forall  \tilde \kappa \in \pol_1( \tilde e_i),\ i=1,2,3.
\]
Thus, $ \tilde \bw_1\cdot  \tilde \bn_i|_{\tilde e_i} = -  \tilde k_i  \tilde q|_{ \tilde e_i}$,
and therefore, since $\tilde \nab \tilde \mu|_{\tilde K_i} = -\tilde k_i^{-1}\tilde \bn_i$ ($i=1,2,3$),
\begin{equation}\label{eqn:hatW1}
 \tilde \bw_1\cdot  \tilde \nab  \tilde\mu|_{\p \tilde  T} =  \tilde q|_{\p \tilde T}.
\end{equation}

{\em Step 2: Construction of $\bw_0$:}\\
Set 
\begin{equation}\label{eqn:q0Def}
\tilde  q_0 = \frac{-1}{ \tilde \mu} \big( \widetilde{\Div} (\tilde \mu  \tilde \bw_1) -  \tilde q\big).
\end{equation}
By \eqref{eqn:hatW1}, $( \widetilde{\Div} ( \tilde \mu  \tilde \bw_1)- \tilde q)|_{\p \tilde  T} = ( \tilde \nab  \tilde \mu \cdot  \tilde \bw_1 -  \tilde q)|_{\p  \tilde T} = 0$,
and therefore we conclude $ \tilde q_0\in \pol_0(\tilde T^{ct})$.
We also have
\[
\int_{\tilde T}  \tilde \mu  \tilde q_0
  = -\int_{\tilde T} ( \widetilde{\Div} ( \tilde \mu \tilde \bw_1) -  \tilde q) = 0.
\]

Let $ \tilde \bw_0\in \bpol_0(\tilde T)$
be uniquely determined by
\begin{equation}\label{eqn:w0Cond}
{2}\int_{\tilde e_i} (\tilde \bw_0\cdot  \tilde \bn_i) = -  \tilde k_i \int_{\tilde e_i}  \tilde q_0  \qquad  i=1,2,
\end{equation}
i.e., $2  \tilde \bw_0 \cdot  \tilde\nab \tilde \mu|_{ \tilde e_i} =  \tilde q_0|_{\tilde e_i}\ (i=1,2)$, 
which implies $2  \tilde \bw_0 \cdot  \tilde \nab  \tilde\mu|_{\tilde K_i} =  \tilde q_0|_{\tilde K_i}\ (i=1,2)$
because all of the functions in the expression are piecewise constant.
We then calculate
\[
 \Div (\tilde  \mu^2 \tilde  \bw_0)|_{\tilde K_i} = 2  \tilde \mu (\tilde \bw_0 \cdot  \tilde\nab  \tilde\mu)|_{ \tilde K_i} =  \tilde \mu  \tilde q_0|_{\tilde K_i}\qquad i=1,2,
\]
and so,
\[
\int_{ \tilde K_3}  \tilde \mu (2 \tilde \nab \tilde \mu \cdot \tilde \bw_0 - \tilde q_0) = \int_{ \tilde K_3} ( \widetilde{\Div} ( \tilde \mu^2  \tilde \bw_0) - \tilde \mu  \tilde q_0 )
=\int_{\tilde T} ( \widetilde{\Div} ( \tilde \mu^2  \tilde \bw_0) -  \tilde \mu \tilde  q_0 ) = 0.
\]
Thus, $2 \tilde \bw_0 \cdot  \tilde \nab  \tilde \mu|_{\tilde K_3} = \tilde q_0|_{\tilde K_3}$, and we
conclude 
\[
\widetilde{\Div} ( \tilde \mu^2 \tilde \bw_0) = \tilde  \mu  \tilde q_0 = -(\widetilde{\Div} ( \tilde \mu  \tilde \bw_1) -  \tilde q\big) \text{ in $\tilde T$},
\]
that is,
\[
\widetilde{\Div} ( \tilde \mu \tilde  \bw_1)+ \widetilde{\Div} ( \tilde \mu^2 \tilde \bw_0)  = \tilde q.
\]
Finally, we set $\tilde \bw =  \tilde \mu \tilde \bw_1 +  \tilde \mu^2  \tilde \bw_0\in \mathring{\bpol}_2^c( \tilde T^{ct})$,
so that $ \widetilde{\Div}  \bw =  q$.  

{\em Step 3: Estimate of $|\tilde \bw|_{H^1(\tilde T)}$:}

We estimate norms of $\tilde \bw_1$ and $\tilde \bw_0$
separately to derive an estimate of $|\tilde \bw|_{H^1(\tilde T)}$.  First, recall that $\tilde \bw_1\cdot \tilde \bn_i|_{\tilde e_i} = \tilde k_i \tilde q|_{\tilde e_i}$,
and therefore by Lemmas \ref{lem:BDMScaling}
and \ref{lem:Tracey},
\begin{align}\label{eqn:w1Boundy}
\|\tilde \bw_1\|_{L^\infty(\tilde T)}^2 +\|\tilde \nab\tilde  \bw_1\|_{L^2(\tilde T)}^2
&\le C \tilde \varrho^{} |\tilde T|^{-1} \sum_{i=1}^3\tilde  h_i \tilde  k_i^2 \|\tilde q\|_{L^2(\tilde e_i)}^2\\
&\nonumber \le C \tilde \varrho^{}|\tilde T|^{-1}  \sum_{i=1}^3 \tilde h_i \tilde k_i \|\tilde q\|_{L^2(K_i)}^2\le C \tilde \varrho^{} \|\tilde q\|_{L^2(\tilde T)}^2.
\end{align}

To estimate $\tilde \bw_0$, we use a more explicit calculation.
To this end, let $\{\tilde \lambda_j\}_{j=1}^3 \subset \bpol_1(\tilde T)$ be the barycenter
coordinates of $\tilde T$, labeled such that $\tilde \lambda_j(\tilde z_i) = \delta_{i,j}$.
We then write
\begin{equation}\label{eqn:qExpansion}
 \tilde q|_{\tilde K_i} =  \sum_{j=1}^3 a_{i,j}\tilde \lambda_j\qquad a_{i,j}\in \mathbb{R}.
\end{equation}
Note that $a_{i,j} = \tilde q|_{\tilde K_i} (\tilde z_j)$ for $i\neq j$.

A calculation then shows (cf.~\eqref{eqn:hatW1})
\begin{align*}
\tilde \bw_1 = 
\begin{pmatrix}
\tilde k_2 \tilde q|_{\tilde K_2} + \tilde \lambda_2 c_2\\
\tilde k_1 \tilde q|_{\tilde K_1} + \tilde \lambda_1 c_1
\end{pmatrix},
\end{align*}
where the constants $c_j\in \bbR$ are given by
\begin{align}\label{eqn:PITA}
c_j = \frac{-1}{\tilde h_j} \sum_{i=1}^3 a_{i,j} \tilde h_i \tilde k_i =  \frac{-2}{\tilde h_j} \sum_{i=1}^3 |\tilde K_i| a_{i,j}.
\end{align}
Another calculation shows that (cf.~\eqref{eqn:q0Def})
\begin{align*}
\tilde q_0 |_{K_1} &= -(\widetilde \Div \tilde \bw_1 + \frac{c_1}{\tilde h_2}),\\
\tilde q_0 |_{K_2} &= -(\widetilde \Div \tilde \bw_1 + \frac{c_2}{\tilde h_1}),
\end{align*}
and therefore (cf.~\eqref{eqn:w0Cond})
\begin{align}\label{eqn:bw0Exp}
\tilde \bw_0 = -\frac12
\begin{pmatrix}
\tilde k_1 (\widetilde{\Div} \tilde \bw_1+\frac{c_1}{\tilde h_2})\\
\tilde k_2 (\widetilde{\Div} \tilde \bw_1 + \frac{c_2}{\tilde h_1})
\end{pmatrix}.
\end{align}

Because $\widetilde\Div \tilde \bw_1$ is constant, %, and using the assumption $\tilde k_i\le C \tilde h_j$, 
we have 
%\begin{align}\label{eqn:Divbw1Bound}
%\tilde k_i \|\widetilde \Div \tilde \bw_1\|_{L^\infty(\tilde T)} = \tilde k_i |\tilde T|^{-1/2} \|\widetilde{\Div} \tilde \bw_1\|_{L^2(\tilde T)}\le C |\tilde \bw_1|_{H^1(\tilde T)}\quad i=1,2.
%\end{align}
\begin{align*}
|\tilde T| |\widetilde \Div \tilde \bw_1|^2  
&= \int_{\tilde T} |\Div \tilde \bw_1|^2
 = (\widetilde \Div \tilde\bw_1) \int_{\p \tilde T} \tilde\bw_1 \cdot \tilde\bn
 = -(\widetilde \Div \tilde \bw_1) \sum_{i=1}^3 \int_{\tilde e_i} \tilde k_i \tilde q.
\end{align*}
Therefore by the Cauchy-Schwarz inequality and Lemma \ref{lem:Tracey}, we have
\begin{align*}
|\widetilde \Div \tilde \bw_1|
&\le |\tilde T|^{-1} \sum_{i=1}^3 \tilde h_i^{1/2} \tilde k_i \|\tilde q\|_{L^2(\tilde e_i)}\\
&\le |\tilde T|^{-1} \sum_{i=1}^3 \tilde h_i^{1/2} \tilde k^{1/2}_i \|\tilde q\|_{L^2(\tilde K_i)}\\
&\le C |\tilde T|^{-1} \sum_{i=1}^3  |\tilde K_i|^{1/2}\|\tilde q\|_{L^2(\tilde K_i)}\le C |\tilde T|^{-1/2} \|\tilde q\|_{L^2(\tilde T)}.
\end{align*}
Noting that $\tilde k_2\le \tilde h_1$ and $\tilde k_1\le \tilde h_2$ by definition of $\tilde T$, we find
\begin{align}\label{eqn:Divbw1Bound}
\tilde k_i |\widetilde \Div \tilde \bw_1|\le \tilde \asp^{1/2} \|\tilde q\|_{L^2(\tilde T)}.
\end{align}

We now show $\frac{\tilde k_1 |c_1|}{\tilde h_2}\le C\|\tilde q\|_{L^2(\tilde T)}$, 
where the constant $c_1$ is given by \eqref{eqn:PITA}.
We first note that, by \eqref{eqn:qExpansion}, for $i\neq j$, 
\begin{align*}
|\tilde K_i| |a_{i,j}| = |\tilde K_i| |\tilde q|_{K_i} (a_j)|\le |K_i| \|\tilde q\|_{L^\infty(K_i)}\le C |\tilde K_i|^{1/2} \|\tilde q\|_{L^2(\tilde K_i)},
\end{align*}
where a standard scaling argument (inverse estimate) was used in the last inequality.
On the other hand, the value of $q|_{K_1}$ at the split point $\vec{k} = (\tilde k_2,\tilde k_1)^\intercal $ is
\begin{align*}
\tilde q|_{K_1}(\vec{k}) = 
a_{1,1} \tilde \lambda_1(\vec{k}) + \tilde q|_{\tilde K_1} (\tilde z_2) \tilde \lambda_2 (\vec{k}) + \tilde q|_{\tilde K_1}(\tilde z_3)\tilde \lambda_3(\vec{k}).
\end{align*}
Using $\tilde \lambda_1(\vec{k}) = \frac{\tilde k_1}{\tilde h_2}$ and $0\le \tilde \lambda_j\le 1$, we conclude
$|a_{1,1}|\le C \frac{\tilde h_2}{\tilde k_1}\|\tilde q\|_{L^\infty(\tilde K_1)}$.  Therefore,
\begin{align*}
|\tilde K_1| |a_{1,1}| \le C |K_1| \frac{\tilde h_2}{\tilde k_1} \|\tilde q\|_{L^\infty(\tilde K_1)}\le C |\tilde K_1|^{1/2} \frac{\tilde h_2}{\tilde k_1} \|\tilde q\|_{L^2(\tilde K_1)}.
\end{align*}

Thus, using $\tilde k_2\le \tilde h_1$ and $\tilde k_1\le \tilde h_2$, we have
%$\tilde k_i\le C \tilde h_j$,
\begin{align*}
\frac{\tilde k_1 |c_1|}{\tilde h_2}
&= \frac{2 \tilde k_1 }{\tilde h_1 \tilde h_2} \Big||\tilde K_1| a_{1,1} + |\tilde K_2| a_{2,1} + |\tilde K_3| a_{3,1}\Big|\\
&\le \frac{C \tilde k_1 }{\tilde h_1 \tilde h_2} \big(|\tilde K_1|^{1/2} \frac{\tilde h_2}{\tilde k_1} + |\tilde K_2|^{1/2}+ |\tilde K_3|^{1/2}\big) \|\tilde q\|_{L^2(\tilde \tilde T)}\le C \tilde \asp^{1/2} \|\tilde q\|_{L^2(\tilde T)}.
%
%&\le C\Big( (k_1 h_1)^{1/2} \frac{1}{h_1} + \frac{k_1}{h_1 h_2} (k_2 h_2)^{1/2}+  \frac{k_1}{h_1 h_2} (k_3 h_3)^{1/2}\Big)\|q\|_{L^2(\tilde T)}\\
%&\le C\|\tilde q\|_{L^2(\tilde T)}.
\end{align*}
The same arguments show
\begin{align*}
\frac{\tilde k_2 |c_2|}{\tilde h_1}\le C \tilde \asp^{1/2}\|\tilde q\|_{L^2(\tilde T)}.
\end{align*}

Thus, we conclude from \eqref{eqn:bw0Exp} and \eqref{eqn:Divbw1Bound}, that
\begin{align}\label{eqn:bw0Estimate}
\|\tilde \bw_0\|_{L^\infty(\tilde T)}
%&\le C\big( |\tilde \bw_1 |_{H^1(\tilde T)}+ \|\tilde q\|_{L^2(\tilde T)}\big)
\le C {\tilde \varrho}^{1/2} \|\tilde q\|_{L^2(\tilde T)}.
\end{align}

Finally, we combine \eqref{eqn:w1Boundy} and \eqref{eqn:bw0Estimate} to obtain
\begin{align*}
|\tilde \bw|_{H^1(\tilde T)}
&\le C\big( \|\tilde  \mu\|_{L^\infty(\tilde T)} |\tilde  \bw_1|_{H^1(\tilde T)}
+ |\tilde  \mu|_{H^1(\tilde T)} \|\tilde  \bw_1\|_{L^\infty(\tilde T)}+ |\tilde  \mu|_{H^1(\tilde T)} \|\tilde  \bw_0\|_{L^\infty(\tilde T)}\big)\\
&\le C \tilde  \varrho^{1/2} (1+ |\tilde  \mu|_{H^1(\tilde T)}) \|\tilde  q\|_{L^2(\tilde T)}.
\end{align*}\hfill
\end{myproof}

%%%%%%%%%%%%%%%%%%%%
%%%%%%%%%%%%%%%%%%%%
%%%%%%%%%%%%%%%%%%%%
\begin{corollary}\label{lem:HatScale}
There holds
%Let $\tilde \mu\in \mathring{\pol}_1^c(\tilde T^{ct})$ be the hat function 
%associated with the split point $(\tilde k_2,\tilde k_1)^\intercal $.  Then
\begin{align*}
|\tilde \mu|_{H^1(\tilde T)}^2 = \frac12 \sum_{i=1}^3 \frac{\tilde h_i}{\tilde k_i},
\end{align*}
and therefore, under the assumptions stated in Theorem \ref{thm:SkewedStability},
\[
|\tilde  \bw|_{H^1( \tilde T)}\le C {\tilde \varrho}^{1/2}\left(\sum_{i=1}^3 \frac{\tilde h_i}{\tilde k_i}\right)^{1/2} \|\widetilde{\Div} \tilde \bw\|_{L^2(\tilde T)}\qquad \forall \tilde \bw\in \mathring{\bpol}_2^c(\tilde T_T^{ct}).
\]
\end{corollary}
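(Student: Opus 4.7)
My plan is to prove the corollary in two stages: first establish the explicit identity for $|\tilde \mu|_{H^1(\tilde T)}^2$, then insert it into Theorem \ref{thm:SkewedStability}.

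For the identity, I will work piecewise on the Clough--Tocher partition $\tilde T_T^{ct}$. By construction $\tilde\mu$ is continuous, piecewise linear on $\tilde T_T^{ct}$, equals $1$ at the split point $\vec k = (\tilde k_2,\tilde k_1)^\intercal$, and vanishes on $\partial \tilde T$. On each subtriangle $\tilde K_i$, $\tilde\mu$ therefore varies linearly from the value $1$ at $\vec k$ to $0$ along the opposite edge $\tilde e_i$, whose distance from $\vec k$ is exactly the altitude $\tilde k_i$. Hence $|\tilde\nabla\tilde\mu|_{\tilde K_i}| = 1/\tilde k_i$, and since $|\tilde K_i| = \tilde h_i \tilde k_i/2$, I obtain
\[
|\tilde \mu|_{H^1(\tilde K_i)}^2 = \frac{|\tilde K_i|}{\tilde k_i^2} = \frac{\tilde h_i}{2\tilde k_i}.
\]
Summing over $i=1,2,3$ yields the desired identity.

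For the final inequality, I substitute into the bound of Theorem \ref{thm:SkewedStability}: since $|\tilde\mu|_{H^1(\tilde T)} = \bigl(\tfrac12\sum_i \tilde h_i/\tilde k_i\bigr)^{1/2}$, it only remains to absorb the additive $1$. For this I note that each ratio $\tilde h_i/\tilde k_i = \tilde h_i^2/(2|\tilde K_i|)$ is uniformly bounded below, since $\tilde h_i$ is the longest edge of $\tilde K_i$ and the area of any triangle is at most $\tfrac{\sqrt 3}{4}$ times the square of its diameter; so $\sum_i \tilde h_i/\tilde k_i \ge C_0 > 0$ for a universal constant $C_0$. Therefore $1 + |\tilde\mu|_{H^1(\tilde T)} \le C\bigl(\sum_i \tilde h_i/\tilde k_i\bigr)^{1/2}$, and the stated bound follows.

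The computation is essentially direct; the only conceptual point I anticipate needing care is the absorption of the constant $1$ into the square-root factor, which ultimately rests on the elementary geometric fact above. No additional estimates beyond what is already developed in Theorem \ref{thm:SkewedStability} are required.
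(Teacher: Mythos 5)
Your computation of the identity is correct and is essentially the paper's own argument: on each $\tilde K_i$ one has $\tilde\nabla\tilde\mu = -\tilde k_i^{-1}\tilde\bn_i$ and $|\tilde K_i| = \tfrac12\tilde h_i\tilde k_i$, and summing gives $\tfrac12\sum_i\tilde h_i/\tilde k_i$. You also correctly flag that passing from Theorem \ref{thm:SkewedStability} to the stated bound requires absorbing the additive $1$, i.e., a uniform positive lower bound on $\sum_i\tilde h_i/\tilde k_i$; the paper leaves this step implicit.

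However, your justification of that lower bound is wrong as stated. You claim each individual ratio $\tilde h_i/\tilde k_i$ is uniformly bounded below because $\tilde h_i$ is the longest edge of $\tilde K_i$. Neither assertion holds for a general interior split point: take $\tilde T$ with $\tilde h_1\ll\tilde h_2$ and the barycenter as split point; then $\tilde k_1=\tilde h_2/3$, so $\tilde h_1/\tilde k_1 = 3\tilde h_1/\tilde h_2\to 0$, and the two edges of $\tilde K_1$ meeting the split point have length $\tilde h_3/3\gg\tilde h_1$. (The ``longest edge'' property is established in Lemma \ref{lem:IncAsp} only for the incenter split of $T$, and the split point of $\tilde T^{ct}_T$ is the image of $z_0$ under $\tilde F_T^{-1}$, not the incenter of $\tilde T$.) The conclusion you need is nevertheless true, and the repair is one line: since the split point is interior to $\tilde T$, $\tilde k_3$ is at most the altitude of $\tilde T$ over $\tilde e_3$, namely $2|\tilde T|/\tilde h_3=\tilde h_1\tilde h_2/\tilde h_3\le\tilde h_3$, so the single term $\tilde h_3/\tilde k_3\ge 1$ already bounds the sum from below. (Alternatively, $\sum_i\tilde h_i\tilde k_i=2|\tilde T|$ and Cauchy--Schwarz give $\sum_i\tilde h_i/\tilde k_i\ge|\p\tilde T|^2/(2|\tilde T|)\ge 6\sqrt3$.) With that replacement your argument is complete.
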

\begin{proof}
The function $\tilde \mu$ satisfies  $\tilde \nab \tilde \mu|_{\tilde K_i} = -\frac1{\tilde k_i} \tilde \bn_i$.  Therefore,
\begin{align*}
|\tilde \mu|_{H^1(\tilde T)}^2
& = \sum_{i=1}^3 |\tilde K_i| \tilde k_i^{-2}
 = \frac12 \sum_{i=1}^3 \frac{\tilde h_i}{\tilde k_i}.
 \end{align*}\hfill
 \end{proof}

We now apply Corollary \ref{lem:HatScale}
to two situations, each determined by the location
of the split point of $T^{ct}$: barycenter refinement
and incenter refinement.

\subsubsection{Estimates of inf-sup stability constant $\beta_{T^{ct}}$
on barycenter refined meshes}
The barycenter of a triangle is preserved 
via affine diffeomorphisms and therefore, if the split point is taken to be the barycenter ($z_0 = z_{\bary}$),
 the local triangulation on the reference triangle $\tilde T_{T}^{ct}$
is independent of $T$.  In this setting $(\tilde k_2,\tilde k_1)^\intercal = \frac13(\tilde h_1,\tilde h_2)^\intercal$
is the barycenter of $\tilde T$, and $\tilde k_3 = \frac{\tilde h_1 \tilde h_2}{\tilde h_3}$.
Thus, we have
\begin{align*}
|\tilde \mu|_{H^1(\tilde T)}^2 = \frac{1}{2}\sum_{i=1}^3 \frac{\tilde h_i}{\tilde k_i}
& = \frac{1}{2}\Big(\frac{\tilde h_1}{\tilde h_2} + \frac{\tilde h_2}{\tilde h_1} + \frac{\tilde h_3^2}{\tilde h_1 \tilde h_2} \Big)\le \frac{3}{2} \tilde \varrho.
\end{align*}
%\textcolor{magenta}{I think the 1/2 was dropped I put it back in}
Via Theorem \ref{thm:SkewedStability} and mapping back to $T$, 
we have a refinement of 
Lemma \ref{lem:LocalStab} on barycenter refined meshes. 
%%%%%%%%%%%%%%%%%%%%
%%%%%%%%%%%%%%%%%%%%
%%%%%%%%%%%%%%%%%%%%
\begin{lemma}\label{lem:StabBary}
Suppose that the split point of $T^{ct}$ is the barycenter of $T$, and that $T$ 
satisfies the large angle condition.
Then
\[
\|\nab \bv\|_{L^2(T)}\le C \varrho_T^{} \|\Div \bv\|_{L^2(T)}\qquad \forall \bv\in \mathring{\bpol}_2^c(T^{ct}).
\]
\end{lemma}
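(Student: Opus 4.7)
The plan is to assemble the pieces already laid out in Section 3.2: transfer from $T$ to the scaled reference triangle $\tilde T$ via Lemma \ref{mapisgood}, invoke Theorem \ref{thm:SkewedStability} (or equivalently Corollary \ref{lem:HatScale}) on $\tilde T_T^{ct}$, and use the affine invariance of the barycenter to get an explicit bound on the hat function seminorm.

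First, because the barycenter is preserved under affine maps, the image $(\tilde k_2,\tilde k_1)^\intercal = \tilde F_T^{-1}(z_{\bary})$ of the split point is exactly the barycenter of $\tilde T$, so $\tilde k_1 = \tilde h_2/3$ and $\tilde k_2 = \tilde h_1/3$. A direct distance computation (from the barycenter $(\tilde h_1/3,\tilde h_2/3)$ to the hypotenuse) gives $\tilde k_3 = \tilde h_1\tilde h_2/(3\tilde h_3)$. Substituting into Corollary \ref{lem:HatScale} yields
\[
|\tilde\mu|_{H^1(\tilde T)}^2 = \frac{1}{2}\sum_{i=1}^3 \frac{\tilde h_i}{\tilde k_i}
= \frac{3}{2}\Big(\frac{\tilde h_1}{\tilde h_2}+\frac{\tilde h_2}{\tilde h_1}+\frac{\tilde h_3^2}{\tilde h_1\tilde h_2}\Big)\le C\tilde\varrho,
\]
where $\tilde\varrho = \tilde h_2/\tilde h_1$.

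Next, I would insert this bound into Theorem \ref{thm:SkewedStability}. For any $\tilde\bw\in \mathring{\bpol}_2^c(\tilde T^{ct}_T)$, this gives
\[
|\tilde\bw|_{H^1(\tilde T)} \le C\tilde\varrho^{1/2}(1+|\tilde\mu|_{H^1(\tilde T)})\|\widetilde{\Div}\tilde\bw\|_{L^2(\tilde T)} \le C\tilde\varrho\,\|\widetilde{\Div}\tilde\bw\|_{L^2(\tilde T)},
\]
i.e., $\beta_{\tilde T^{ct}_T} \ge C/\tilde\varrho$. Finally, given $\bv\in \mathring{\bpol}_2^c(T^{ct})$, define $\tilde\bv$ on $\tilde T$ by the scaled Piola transform described just before the statement of Theorem \ref{thm:SkewedStability}; the change-of-variables computation given there, combined with $\|D\tilde F_T\|,\|D\tilde F_T^{-1}\|\le C(\delta)$ from Lemma \ref{mapisgood}, produces
\[
\|\nab\bv\|_{L^2(T)}\le C\beta_{\tilde T^{ct}_T}^{-1}\|\Div\bv\|_{L^2(T)}\le C\tilde\varrho\,\|\Div\bv\|_{L^2(T)}.
\]

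The only remaining point is to replace $\tilde\varrho$ by $\varrho_T$. Since $\tilde F_T$ and its inverse have operator norms bounded in terms of $\delta$ alone, the triangles $T$ and $\tilde T$ have comparable diameters, inradii, and hence comparable aspect ratios; thus $\tilde\varrho\le C\varrho_T$, and the stated bound follows.

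I do not expect a genuine obstacle here: the heavy lifting is in Theorem \ref{thm:SkewedStability}, and the barycenter case is especially clean because affine invariance lets me read off $\tilde k_1,\tilde k_2$ for free. The only mildly delicate step is the short computation of $\tilde k_3$ and verifying that $\tilde h_3^2/(\tilde h_1\tilde h_2)\lesssim \tilde\varrho$, which follows from $\tilde h_3^2 = \tilde h_1^2+\tilde h_2^2\le 2\tilde h_2^2$.
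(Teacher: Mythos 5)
Your proposal is correct and follows essentially the same route as the paper: affine invariance of the barycenter to read off $\tilde k_1,\tilde k_2,\tilde k_3$, Corollary \ref{lem:HatScale} to bound $|\tilde\mu|_{H^1(\tilde T)}^2\le C\tilde\varrho$, Theorem \ref{thm:SkewedStability} to get $\beta_{\tilde T^{ct}_T}\ge C/\tilde\varrho$, and the Piola/change-of-variables argument with Lemma \ref{mapisgood} to map back to $T$. Your explicit remarks on $\tilde k_3$ and on $\tilde\varrho\le C\varrho_T$ are fine points the paper leaves implicit.
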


%%%%%%%%%%%%%%%%%%%%%%%%%%%%%%%%%%%%%%%%%%%%%%%%%%
%%%%%%%%%%%%%%%%%%%%%%%%%%%%%%%%%%%%%%%%%%%%%%%%%%
\subsubsection{Estimates of inf-sup stability constant $\beta_{T^{ct}}$
on incenter refined meshes}

The incenter of $T$ is $z_{\inc} = \frac{1}{\perim}(h_1 z_1+h_2z_2 + h_3z_3).$  Using the affine transformation given in Lemma \ref{mapisgood},
we have $(\tilde k_2,\tilde k_1)^\intercal=A^{-1}(z_{\inc}-b)$.  Using the formula for $A$ in the proof of Lemma \ref{mapisgood}, we calculate
\begin{equation*}
    \begin{aligned}
(\tilde k_2,\tilde k_1)^\intercal %&=  A^{-1}(z_{in,T} -z_3)\\
&=
    A^{-1}\Bigg(\frac{1}{\perim}(h_1 z_1+h_2z_2 + h_3z_3) -\frac{(h_1+h_2+h_3)z_3}{\perim}\Bigg)
    \\&=
    A^{-1}\Big( \frac{h_1(z_1-z_3)+h_2(z_2-z_3)}{\perim}  \Big)\\
    %&=
    %\frac{h_1}{\perim}A^{-1}(z_1-z_3)+\frac{h_2}{\perim}A^{-1}(z_2-z_3)
        %\\
        &=
    \frac{h_1}{\perim}\tilde{z_1}+\frac{h_2}{\perim}\tilde{z_2}
    %\\&=
%    \frac{h_1}{\perim}(0,h_2)+ \frac{h_1}{\perim}(h_1,0)
    =
    \frac{h_1h_2}{\perim}(1,1)^\intercal.
    \end{aligned}
\end{equation*}
Thus, $\tilde k_1=\tilde k_2 = \frac{h_1h_2}{\perim}$, %= \frac12 h_3 \asp_T$,
and 
\begin{align*}
{\tilde k_3} 
&= \frac{h_1 h_2 - h_2 \tilde k_2 - h_1 \tilde k_1}{\tilde h_3} = \frac{h_3}{\tilde h_3} \frac{h_1 h_2}{|\p T|}.
\end{align*}

We then compute, via Corollary \ref{lem:HatScale},
\begin{align*}
|\tilde \mu|_{H^1(\tilde T)}^2 = \frac12 \sum_{i=1}^3 \frac{\tilde h_i}{\tilde k_i} 
= \frac{|\p T|}{ 2 h_1 h_2} \big(\tilde h_1 + \tilde h_2 + \frac{\tilde h_3^2}{h_3}\big)
{\le\frac{\perim}{4|T|}(h_3+h_3+2h_3)= 4\asp^{}_T.}
\end{align*} 

% \begin{align*}
% |\tilde \mu|_{H^1(\tilde T)}^2 
% = 
% \frac12 \sum_{i=1}^3 \frac{\tilde h_i}{\tilde k_i} 
% = 
% \frac{|\p T|}{ 2 h_1 h_2} \big(\tilde h_1 + \tilde h_2 + \frac{\tilde h_3^2}{h_3}\big)
% \KK{\le\frac{|\partial\tilde{T}|}{4|\tilde{T}|}\frac{\perim}{|\partial\tilde{T}|}(\tilde{h}_3+\tilde{h}_3+\sqrt{2}\tilde{h}_3)= 4\tilde{\asp}^{-1}.}
% \end{align*} 

%\MJN{\bf [Add details to last inequality.]}
%\KK{Not sure how many details need to be added, so I added one extra line, full details are:
%
%$2h_1h_2 \ge 4|T|$
%
%$\tilde{h_1},\tilde{h_2}=h_1,h_2 \leq h_3$
%
%$\frac{\tilde{h_3^2}}{h_3}=h_3\frac{\tilde{h_3^2}}{h_3^2}$
%
%$\frac{\tilde{h_3}}{h_3} \leq \frac{\sqrt{h_1^2+h_2^2}}{h_2} = \sqrt{\frac{h_1^2}{h_2^2}+1} \leq \sqrt{2}$ (given that $h_1\leq h_2$)
%
%Unsure if we just leave it at 4 and trust people to understand it or add more steps? (how much details is details?)

% Also not sure why we do it in terms of $\varrho_T$ not $\tilde{\varrho}$ so I included that below (it seems more general to me?)

% Full details:
% $2h_1h_2 = 4|\tilde{T}|$

% multiply by $\frac{|\partial \tilde{T}|}{|\partial \tilde{T}|}$

% $\tilde{h_1},\tilde{h_2}\leq \tilde{h}_3, \frac{\tilde{h_3}}{h_3}\leq \sqrt{2} \rightarrow \tilde h_1 + \tilde h_2 + \frac{\tilde h_3^2}{h_3} \leq (2+\sqrt{2})\tilde{h}_3$ 

% $\frac{\perim}{|\partial \tilde{T}|} \leq \frac{2h_1+2h_2}{h_1+h_2+\sqrt{h_1^2+h_2^2}}\leq\frac{4}{2+\sqrt{2}}$

%}

%%%%%%%%%%%%%%%%%%%%
%%%%%%%%%%%%%%%%%%%%
%%%%%%%%%%%%%%%%%%%%
\begin{lemma}\label{lem:StabInc}
Suppose that the split point of $T^{ct}$ is the incenter of $T$ and that $T$ 
satisfies the large angle condition.
Then
\[
\|\nab \bv\|_{L^2(T)}\le C \varrho_T^{} \|\Div \bv\|_{L^2(T)}\qquad \forall \bv\in \mathring{\bpol}_2^c(T^{ct}).
\]
\end{lemma}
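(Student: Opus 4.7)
\textbf{Proof plan for Lemma \ref{lem:StabInc}.}

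The plan is to combine the scaled reference triangle estimate of Theorem \ref{thm:SkewedStability}, the explicit computation of $|\tilde\mu|_{H^1(\tilde T)}$ that the authors have just completed for the incenter case, and the Piola-type transfer discussed just before Theorem \ref{thm:SkewedStability}. In particular, given $\bv\in \mathring{\bpol}_2^c(T^{ct})$, I would first use the affine map $\tilde F_T$ from Lemma \ref{mapisgood} (available because $T$ satisfies $LAC(\delta)$) and the scaled Piola transform to produce $\tilde\bv\in \mathring{\bpol}_2^c(\tilde T_T^{ct})$; then the change-of-variables calculation already laid out in the excerpt yields $\|\nab \bv\|_{L^2(T)}\le C|D\tilde F_T|\,|D\tilde F_T^{-1}|\,\beta_{\tilde T_T^{ct}}^{-1}\|\Div \bv\|_{L^2(T)}\le C\beta_{\tilde T_T^{ct}}^{-1}\|\Div \bv\|_{L^2(T)}$, where the last bound uses $\|A\|,\|A^{-1}\|\le C(\delta)$.

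Next, I would invoke Theorem \ref{thm:SkewedStability} to obtain $\beta_{\tilde T_T^{ct}}^{-1}\le C\tilde\varrho^{1/2}(1+|\tilde\mu|_{H^1(\tilde T)})$. At this point the proof reduces to bounding this quantity by a constant multiple of $\varrho_T$. The key input is the incenter-specific computation already performed in the text, namely $|\tilde\mu|_{H^1(\tilde T)}^2\le 4\varrho_T$. It remains to show that $\tilde\varrho=\tilde h_2/\tilde h_1$ itself is controlled by $\varrho_T$. This follows from a short geometric argument using $LAC(\delta)$: since the largest angle $\alpha_3$ of $T$ is at least $\pi/3$ (by the angle-sum condition) and at most $\pi-\delta$ (by $LAC$), $\sin\alpha_3\ge c(\delta)>0$, and hence $|T|=\tfrac12 h_1 h_2\sin\alpha_3\ge c(\delta)h_1 h_2$. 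Combined with $|\partial T|\ge 2 h_T$, this yields $\tilde\varrho=h_2/h_1\le C(\delta)\,h_T|\partial T|/(4|T|)=C(\delta)\varrho_T$.

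Putting the pieces together, $\tilde\varrho^{1/2}(1+|\tilde\mu|_{H^1(\tilde T)})\le C\varrho_T^{1/2}(1+\varrho_T^{1/2})\le C\varrho_T$ for $\varrho_T\ge 1$ (the case $\varrho_T\lesssim 1$ being trivial and already covered by shape-regular stability), which gives the claimed estimate upon transferring back to $T$.

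I do not expect a significant obstacle here, as the theorem is essentially a corollary of the machinery developed above; the only subtle point is the equivalence $\tilde\varrho\sim \varrho_T$ under $LAC$, which may be implicit in the authors' notation but is worth verifying explicitly, and the mild nuisance of absorbing the constants from Lemma \ref{mapisgood} into the generic constant $C$.
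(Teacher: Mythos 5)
Your proposal is correct and follows essentially the same route as the paper, which states Lemma \ref{lem:StabInc} as an immediate consequence of Theorem \ref{thm:SkewedStability}, the incenter computation $|\tilde\mu|_{H^1(\tilde T)}^2\le 4\varrho_T$, and the Piola-transform transfer set up before that theorem. The only step the paper leaves implicit is the bound $\tilde\varrho\le C\varrho_T$, which you supply via the large angle condition (in fact $\tilde\varrho=h_2/h_1\le\varrho_T$ holds unconditionally, since $\sin\alpha_3\le1$, $h_2\le h_3$, and $|\partial T|\ge 2h_3$), so your argument is a faithful and complete reconstruction.
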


%\MJN{\bf [Add remark on optimal split point?, i.e., one that minimizes $|\tilde \mu|_{H^1(\tilde T)}$, e.g., ]
%\begin{remark}
%
%
%
%We compute 
%\[
%2 |\tilde \mu|^2_{H^1(\tilde T)}  = \frac{\tilde h_1}{\tilde k_1}+ \frac{\tilde h_2}{\tilde k_2} + \frac{\tilde h_3}{\tilde k_3}.
%\]
%Write
%\[
%\tilde k_3 = \frac{\tilde h_1 \tilde h_2 - \tilde h_2 k_2 -\tilde h_1 \tilde k_1}{\tilde h_3},
%\]
%so that, for $i=1,2$,
%\[
%2 \frac{\p }{\p \tilde k_i} (|\tilde \mu|^2_{H^1(\tilde T)}) = - \frac{\tilde h_i}{\tilde k_i^2} - \frac{\tilde h_3}{\tilde k_3^2} \frac{\p \tilde k_3}{\p \tilde k_i}  = - \frac{\tilde h_i}{\tilde k_i^2} + \frac{\tilde h_3}{\tilde k_3^2} \frac{\tilde h_i}{\tilde h_3}
%=  - \frac{\tilde h_i}{\tilde k_i^2} + \frac{\tilde h_i}{\tilde k_3^2}.
%\]
%Therefore critical points of the mapping $(\tilde k_2,\tilde k_1)\to 2 |\tilde \mu|_{H^1(\tilde T)}^2$ satisfy
%\[
%\tilde k_1  = \tilde k_2 = \tilde k_3=:\tilde k,
%\]
%which is a characterization of the incenter.  A calculation 
%shows that the  Hessian matrix of the mapping $(\tilde k_2,\tilde k_1)\to 2 |\tilde \mu|_{H^1(\tilde T)}^2$
%is positive definite, so the choice of the incenter 
%minimizes $|\tilde \mu|_{H^1(\tilde T)}^2$.  The value of $|\tilde \mu|_{H^1(\tilde T)}^2$
%in this case is
%\[
%|\tilde \mu|_{H^1(\tilde T)}^2 = \frac12 \frac1{\tilde k} |\p \tilde T| = \frac12 \frac{|\p \tilde T|}{2 |\tilde T|} |\p \tilde T| \sim \tilde \asp^{-1}.
%\]
%Note this calculation works on arbitrary $T\in \calT_h$.
%\end{remark}
%}

%%%%%%%%%%%%%%%%%%%%%%%%%%%%%%%%%%%%%%%%%%%%%%%%%%%%
%%%%%%%%%%%%%%%%%%%%%%%%%%%%%%%%%%%%%%%%%%%%%%%%%%%%
\subsection{Summary of Stability Results}
We summarize the stability results in the following theorem.
Combining Theorem \ref{thm:StdStability},
Theorem \ref{thm:ApelResult},
and Lemmas \ref{lem:StabBary}--\ref{lem:StabInc} 
yields the following result.
\begin{theorem}\label{thm:MainStabey}
Suppose $\calT_h$ satisfies a large angle condition.
Suppose further that isotropic patches, boundary layer patches
or corner patches are used (cf.~Section \ref{sec:Apel-summary}).
Let $\calT^{ct}_h$ denote the Clough-Tocher refinement
with respect to either the barycenter or incenter of each $T\in \calT_h$.
Then the inf-sup condition \eqref{eqn:InfSupStatement}
is satisfied, where {$\beta \ge C\min_{T\in \calT_h} \varrho_T^{-1}$}.

\end{theorem}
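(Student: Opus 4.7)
The plan is straightforward: this theorem is a synthesis result, combining the abstract stability estimate in Theorem \ref{thm:StdStability} with explicit bounds on its two ingredients $\beta_0$ and $\beta_*$ obtained earlier in the section. I would not re-derive anything from scratch; the work is in assembling the pieces and tracking the dependence on the aspect ratio.

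First I would translate Lemmas \ref{lem:StabBary} and \ref{lem:StabInc} into a lower bound on the local inf-sup constant $\beta_{T^{ct}}$. Each lemma states that $\|\nab \bv\|_{L^2(T)} \le C \varrho_T \|\Div \bv\|_{L^2(T)}$ for all $\bv \in \mathring{\bpol}_2^c(T^{ct})$, under the large angle condition and with either barycenter or incenter refinement. By the remark following Lemma \ref{lem:LocalStab}, $\beta_{T^{ct}}$ is the largest constant making this inequality hold, hence $\beta_{T^{ct}} \ge C \varrho_T^{-1}$. Taking the minimum over $T \in \calT_h$ gives
\[
\beta_* = \min_{T \in \calT_h} \beta_{T^{ct}} \ge C \min_{T \in \calT_h} \varrho_T^{-1}.
\]

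Next I would invoke Theorem \ref{thm:ApelResult} to conclude that, under the patch assumptions on $\calT_h$, there exists a constant $\beta_0 \ge c_0 > 0$ independent of the aspect ratio and mesh size for the $\mathring{\bpol}^c_2(\calT_h)-\mathring{\pol}_0(\calT_h)$ pair. The critical point is that $\beta_0$ does not degenerate in the anisotropic regime, so it can be absorbed into a generic constant.

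Finally I would substitute these bounds into the formula
\[
\beta = \frac{\beta_0 \beta_*}{\beta_* + \beta_0 + 1}
\]
from Theorem \ref{thm:StdStability}. Since $\beta_0 \ge c_0$ is bounded away from zero and $\beta_* \le \beta_*$ trivially, the denominator satisfies $\beta_* + \beta_0 + 1 \le C(1 + \beta_*)$, and in the relevant regime where $\varrho_T \to \infty$ one has $\beta_* \lesssim 1$, so $\beta_* + \beta_0 + 1 \le C$. Therefore
\[
\beta \ge \frac{c_0 \beta_*}{C} \ge C \min_{T \in \calT_h} \varrho_T^{-1},
\]
which is the claimed bound. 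The only step that requires a moment of care is verifying that the formula for $\beta$ does indeed degrade no worse than linearly in $\beta_*$; this follows because $\beta_0$ is bounded below uniformly, so the term $1/\beta_*$ does not enter the final rate. No step presents a real obstacle since all the heavy lifting has been done in the preceding lemmas and in Theorem \ref{thm:StdStability}.
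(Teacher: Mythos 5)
Your proposal is correct and follows essentially the same route as the paper, which simply assembles Theorem \ref{thm:StdStability}, Theorem \ref{thm:ApelResult}, and Lemmas \ref{lem:StabBary}--\ref{lem:StabInc} in exactly this way. The only informal step, bounding the denominator $\beta_*+\beta_0+1$, in fact holds unconditionally (not just as $\varrho_T\to\infty$) because $\|\Div\bv\|_{L^2(T)}\le\sqrt{2}\,\|\nab\bv\|_{L^2(T)}$ forces $\beta_{T^{ct}}\le\sqrt{2}$, hence $\beta_*\le\sqrt{2}$.
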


\section{Numerical Experiments}\label{sec-numerics}
In this section we numerically investigate the theoretical results from the previous sections and explore the performance of the traditional barycenter refined meshes versus incenter refinement. All calculations are performed using the finite element software FEniCS \cite{LNW12}. The associated code can be found on GitHub at https://github.com/mschneier91/anisotropic-SV. 

\subsection{Barycenter vs Incenter Aspect Ratio, Inf-Sup Constant, and Scaling}
For the first numerical experiment we examine the aspect ratio, inf-sup constant, and scaling between these quantities for the different refinement methods. We begin with an initial $ 2 \times 2 $ mesh on ${\Omega = (0,1)^2}$ %[0,1] \times [0,1]$ 
and perform a repeated barycenter or incenter refinement. The aspect ratio on the barycenter refined mesh, ${\varrho_{bary}}$, and incenter refined mesh, ${\varrho_{inc}}$, are defined as the maximum aspect ratio over all mesh cells. In practice we do not recommend this mesh refinement strategy as the error {of} a solution would plateau due to mesh edges not being refined (see \cite{FMSW21} for a hierarchical approach that is convergent). However, this refinement strategy allows for easy numerical inspection of the theoretical results proven in Section \ref{sec-prelims} and Section \ref{sec-stability}. 

We see in Table \ref{tbl:numex1} and Table \ref{tbl:numex2} that ${\varrho_{inc}}$ is {smaller} than ${\varrho_{bary}}$ at all refinement levels. Additionally, ${\varrho_{inc}}$ increases by a factor of $2$ at each refinement level whereas ${\varrho_{bary}}$ increases by a factor of $3$. These numerical results align with the bounds proven in Lemma \ref{lem:IncAsp} and Lemma \ref{lem:BaryAsp}. 

It is also shown in Table \ref{tbl:numex1} and Table \ref{tbl:numex2} that the inf-sup constant for both refinement strategies scales linearly with {${\varrho}^{-1}$}. This results in a larger inf-sup constant for for incenter refinement compared to barycenter refinement due to the smaller aspect ratio resulting from using incenter refinement. This result conforms with the theoretical scaling proven in Theorem \ref{thm:MainStabey}.

\begin{table}[h]
\caption{Inf-Sup Constant Aspect Ratio and Rate Dependence for Barycenter Refined Mesh.}
\begin{center}
 \begin{tabular}{c | c | c | c} 
 refinement level &  {$\beta_{bary}$}  & ${\varrho_{bary}}$ & rate \\ [0.5ex] 
 \hline
 1 & .26301 & 12.32 & - \\ 
 \hline
 2 & .18898 & 36.11 & .30749 \\
 \hline
 3 & .06402 & 108.03& .98777 \\
 \hline
 4 & .02137 & 324.01 & .99862 \\
 \hline
 5 & .00713 & 972.00 & .99985 \\ 
 \hline
 6 & .00238 & 2916.00 & .99998 \\ 
\end{tabular}
\end{center}
\label{tbl:numex1}
\end{table}

\begin{table}[h]
\caption{Inf-Sup Constant Aspect Ratio and Rate Dependence for Incenter Refined Mesh.}
\begin{center}
 \begin{tabular}{c | c | c | c} 
 refinement level &  {$\beta_{inc}$}  & {$\varrho_{inc}$} & rate \\ [0.5ex] 
 \hline
 1 & .27880 & 10.05 & - \\ 
 \hline
 2 & .27590 & 20.30 & .01493 \\
 \hline
 3 & .13861 & 40.71& .98959 \\
 \hline
 4 & .06939 & 81.47 & .99739 \\
 \hline
 5 & .03471 & 162.96 & .99934 \\ 
 \hline
 6 & .01735 & 325.94 & .99984 \\ 
\end{tabular}
\end{center}
\label{tbl:numex2}
\end{table}

% \begin{table}[h]\label{tbl:numex1}
% \caption{Aspect Ratio Comparison for Barycenter vs Incenter Refined Mesh}
% \begin{center}
%  \begin{tabular}{c | c | c } 
%  refinement level &  $\rho_{inc}^{ct}$ & $\rho_{bary}^{ct}$ \\ [0.5ex] 
%  \hline
%  1 & .08114 & .09946 \\ 
%  \hline
%  2  & .02769 & .049246 \\
%  \hline
%  3  & .00926 & 02456 \\
%  \hline
%  4  & .00308 & .01227 \\
%  \hline
%  5  & .00103 & .00614  \\ 
%  \hline
%  6  & .00034 & 0.00307 \\ 
% \end{tabular}
% \end{center}
% \end{table}

\subsection{Convergence of Barycenter vs Incenter Refinement}
For the second numerical experiment we consider the test problem for the steady state Stokes equation  used in \cite{AK20}. We take the domain {$\Omega =(0,1)^2$} % [0,1] \times [0,1]$ 
and the exact solution
\begin{equation*}
\begin{aligned}
u &= \left(\frac{\partial \xi}{\partial x_2}, -\frac{\partial \xi}{\partial x_1}\right),\qquad
p  = \exp\left(- \frac{x_1}{\epsilon}\right),
\end{aligned}
\end{equation*}
where the stream function is defined as
\begin{equation*}
\xi = x_1^2 (1-x_1)^2 x_2^2(1-x_2)^2\exp\left(- \frac{x_1}{\epsilon}\right).
\end{equation*}
This exact solution is characterized by the fact that the velocity and pressure have an exponential boundary layer of width $\mathcal{O}(\epsilon)$ near $x_1 = 0$. For our computations the parent grid will be the same Shishkin-type mesh used in \cite{AK20}. Letting $N \geq 2$ and $\tau \in (0,1)$ we generate a grid of points
\begin{equation*}
\begin{aligned}
x_1^i &= \begin{cases}
			i \frac{2 \tau}{N}, & 0 \leq i \leq \frac{N}{2}, \ i \in \mathbb{N},\\
            \tau + \left(i - \frac{N}{2}\right)2\frac{(1-\tau)}{N}, & \frac{N}{2} < i \leq N, \ i \in \mathbb{N},
		 \end{cases}
		 \\
    x_2^j &= \frac{j}{N}, 0 \leq j \leq N, \ j \in \mathbb{N},
\end{aligned}
\end{equation*}
and then connect the gird points with edges to obtain a rectangular mesh. Each rectangle is then subdivided into two triangles yielding a triangulation of $\Omega$ with $n = 2 N^2$ elements and an aspect ratio of
\begin{equation*}
\rho = \frac{\sqrt{1 + 4\tau^2}}{1 + 2 \tau - \sqrt{1 + 4\tau^2}}. 
\end{equation*}
An example of this mesh can be seen in Fig. \ref{fig:shisken}.

\begin{figure}[h]
\centering
\includegraphics[scale=.5]{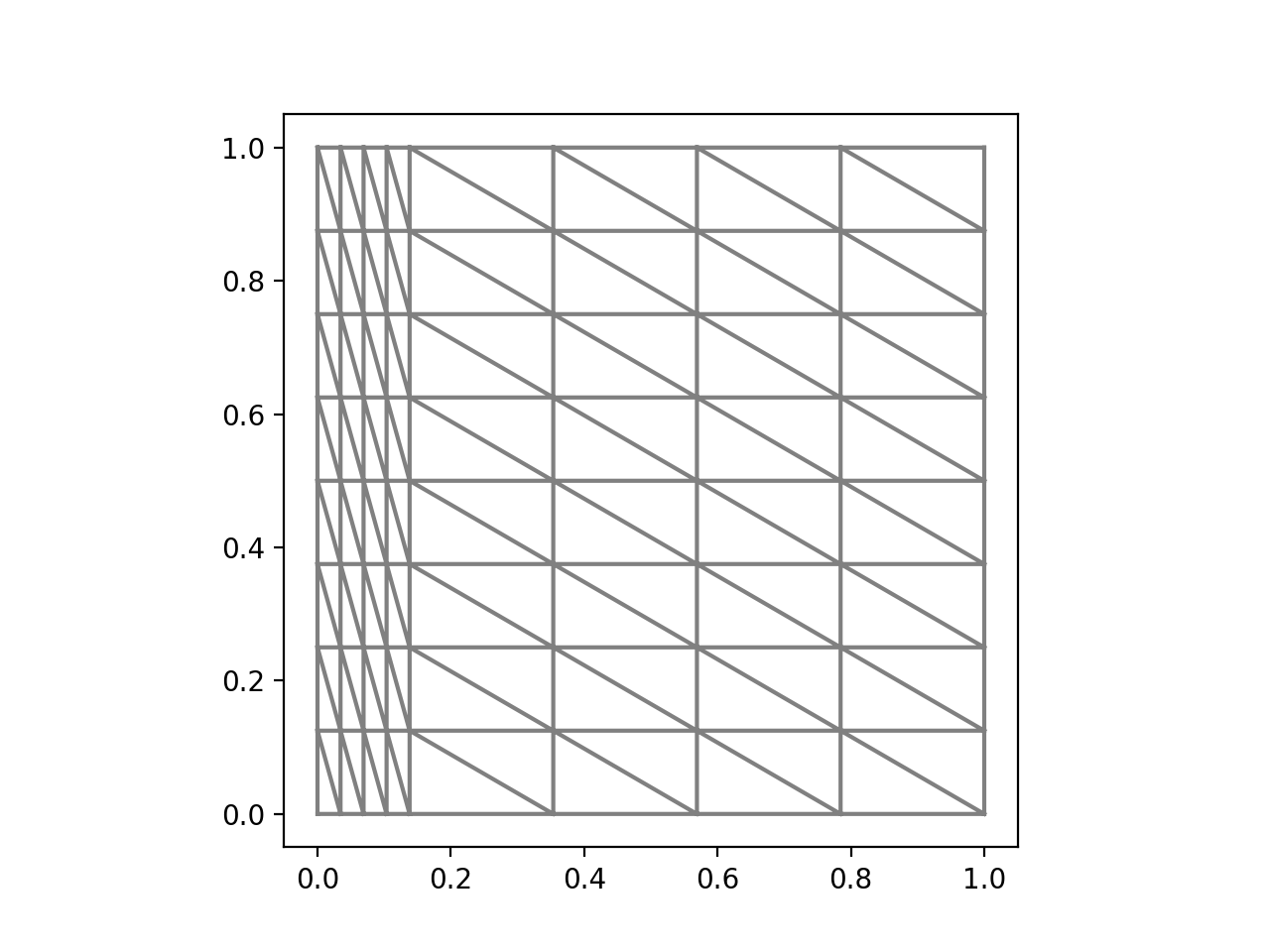}
\caption{Shishkin type mesh with $N =8$, $\epsilon = .01$, and $\tau = 3\epsilon|\log \epsilon|$.}
\label{fig:shisken}
\end{figure}

For this numerical experiment we compare a single barycenter and incenter refinement with $\epsilon = .01$, $\tau = 3\epsilon|\log \epsilon|$, and for varying values of $N$. This results in aspect ratios of ${\varrho_{inc}} \approx 18$ and ${\varrho_{bary}} \approx 24$. We see in Fig. \ref{fig:shisken_errors_velocity} the difference in velocity errors is negligible between the two refinement strategies. However, we see in Fig. \ref{fig:shisken_errors_pressure} there is a small, but noticeable improvement in the pressure error when the incenter refinement is used.

\begin{figure}[h]
\centering
\includegraphics[scale=.4]{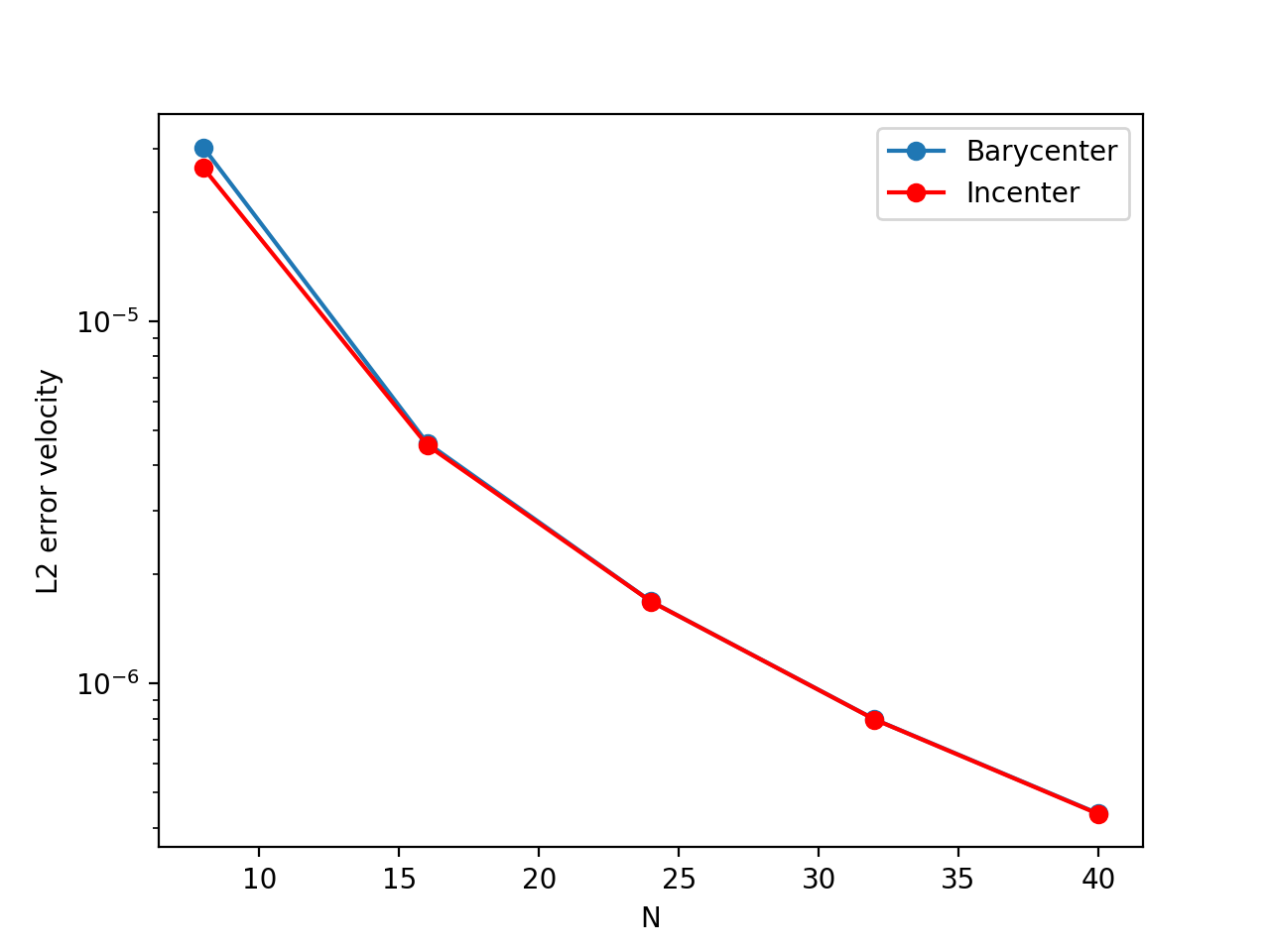}
\includegraphics[scale=.4]{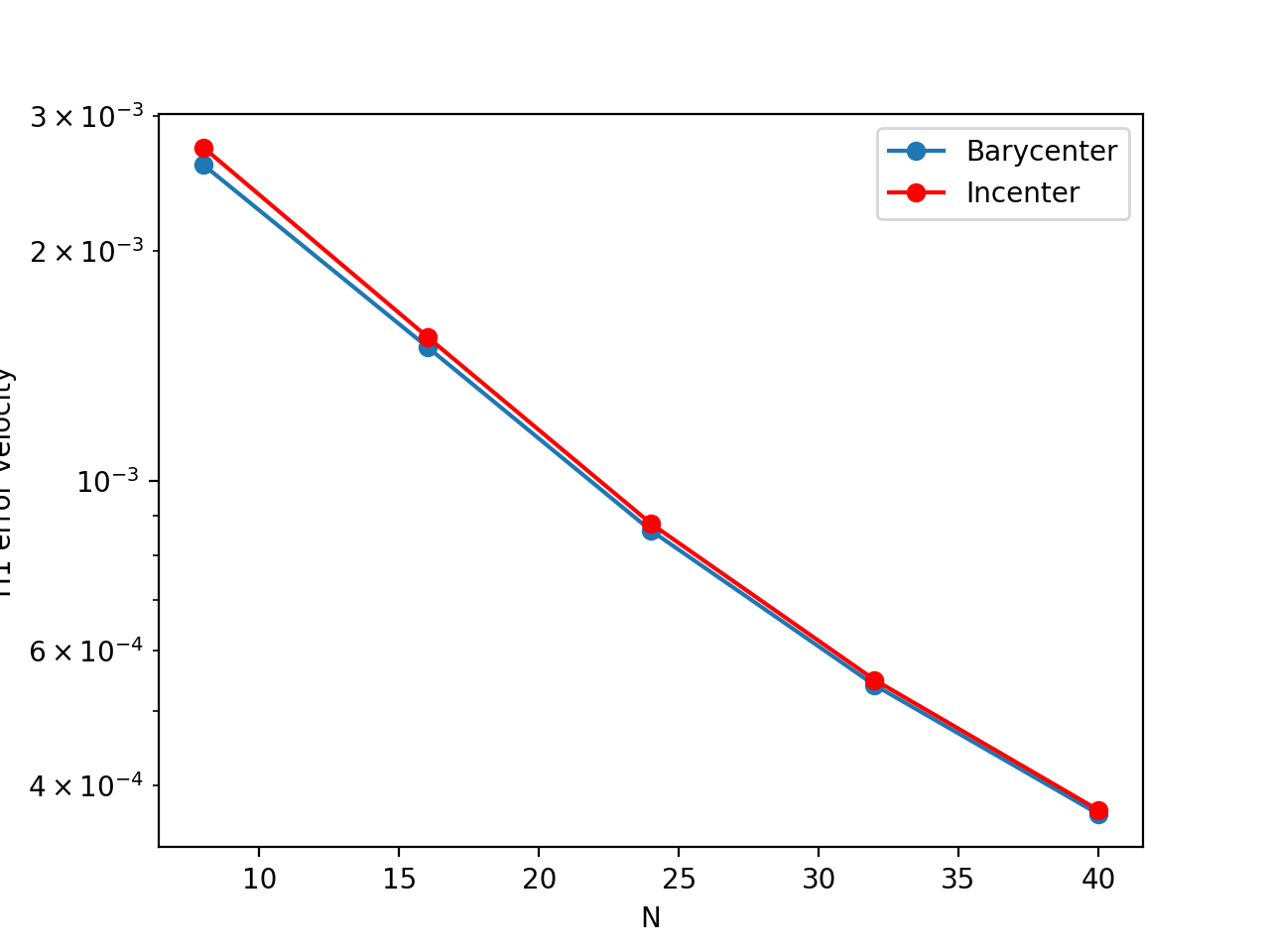}
\caption{Comparison  of $L^2$ velocity error (left) and $H^1$ error (right) for incenter versus barycenter refinement.}
\label{fig:shisken_errors_velocity}
\end{figure}

\begin{figure}[h]
\centering
\includegraphics[scale=.4]{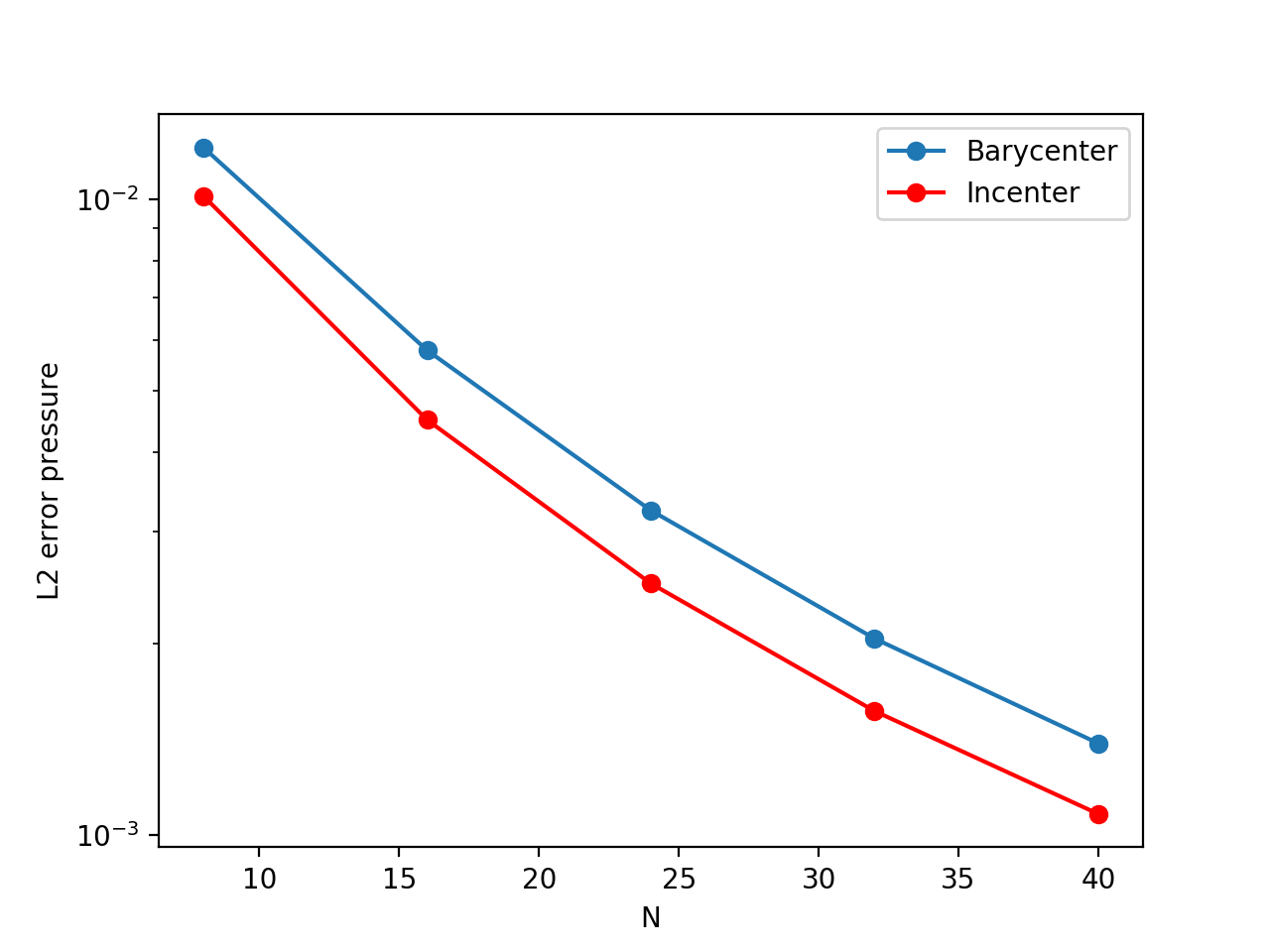}
\caption{Comparison  of $L^2$ pressure error  for incenter versus barycenter refinement.}
\label{fig:shisken_errors_pressure}
\end{figure}

\bibliographystyle{plain}
\bibliography{references}

\newpage

\appendix

\section{Proof of Lemma \ref{mapisgood}}

\begin{proof}
First, we may assume that $\delta < \frac{\pi}{3}.$

We will use the same notation for edges and vertices as in Section \ref{sec-prelims}, in particular preserving the ordering of side lengths.

Define $t_1$ to be the unit vector along $e_1$, and $t_2$ to be the unit vector along $e_2.$ That is, $t_1=\frac{z_2-z_3}{h_1}, t_2 = \frac{z_1-z_3}{h_2}$. Then, let $A$ have columns $t_1,t_2.$ Let $b = z_3.$ Then, the affine map $A\tilde{x} + b$ maps $\tilde{T}$ onto $T.$

It is clear that as each entry in $A$ is bounded above by 1 as the columns are unit vectors. Thus $\|A\|\leq 2$ and $\|adj(A)\|\leq 2,$ giving us $\|A^{-1}\|\leq \frac{2}{|det(A)|}.$ 

It is well known that $|det(A)|$ is the area of the parallelogram formed by the vectors $t_1$ and $t_2.$ We then have the formula $$|det(A)|=|t_1||t_2|\sin{\alpha_3}=\sin{\alpha_3}$$ As $\alpha_3\in[\frac{\pi}{3},\pi-\delta],$ $\sin{\alpha_3}\geq\sin{\delta}$, and $\|A^{-1}\| \leq \frac{2}{\sin{\delta}}=C(\delta).$\hfill
\end{proof}

%%%%%%%%%%%%%%%%%%%%%%%%%%%%%%%%%%%%%%%%%%%%%%%%
%%%%%%%%%%%%%%%%%%%%%%%%%%%%%%%%%%%%%%%%%%%%%%%%
\section{Proof of Lemma \ref{lem:BDMScaling}}

We denote by $\tilde F:\hat T\to \tilde T$ the affine mapping 
given by
\begin{align*}
\tilde F(\hat x) = \begin{pmatrix}
h_1 & 0\\
0 & h_2
\end{pmatrix}.
\end{align*}
For $ \tilde \bv\in \bpol_1(\tilde T)$,
let $\hat \bv\in \bpol_1(\hat T)$ be given via the Piola transform
\begin{align}\label{eqn:tildePiola}
 \tilde \bv(\tilde x) = \frac1{\det(D\tilde F)}D\tilde F \hat \bv(\hat x)=
\begin{pmatrix}
h_2^{-1} \hat v_1(\hat x)\\
h_1^{-1} \hat v_2(\hat x)
\end{pmatrix},\qquad  \tilde x = \tilde F(\hat x).
\end{align}
\begin{proof}
Let $\hat \bv\in \bpol_1(\hat T)$ be defined by \eqref{eqn:tildePiola}.
The  Brezzi-Douglas-Marini DOFs and equivalence of norms yields
\begin{align*}
\|\hat \bv\|^2 
\le C \sum_{i=1}^3 \|\hat \bv\cdot \hat \bn_i\|_{L^2(\hat e_i)}^2
\end{align*}
for any norm $\|\cdot\|$ on $\bpol_1(\hat T)$.
Using the identity $\hat \bv\cdot \hat \bn_i(\hat x) = (h_i/|\hat e_i|) (\bv\cdot \bn_i)(x)$, we have, by a change of variables,
\begin{align*}
\|\hat \bv\|^2
&\le C \sum_{i=1}^3 h_i \| \bv \cdot  \bn_i\|_{L^2( e_i)}^2.
\end{align*}

Furthermore, by the chain rule
\begin{align*}
 \nab  \bv( x) 
&= \frac1{\det(D\tilde F)}D\tilde F \hat \nabla \hat \bv(\hat x)(D\tilde F)^{-1}
 = \frac1{h_1 h_2} \begin{pmatrix}
\frac{\p \hat v_1}{\p \hat x_1} & \frac{h_1}{h_2} \frac{\p \hat v_1}{\p \hat x_2}\\
\frac{h_2}{h_1} \frac{\p \hat v_2}{\p \hat x_1} & \frac{\p \hat v_2}{\p \hat x_2}.
\end{pmatrix} 
\end{align*}
Therefore,
\begin{align*}
\| \nab  \bv\|_{L^2(\tilde T)}^2
&\le 2 |\tilde T| (h_1 h_2)^{-2} \max\{\frac{h_1}{h_2},\frac{h_2}{h_1}\} \|\hat \nab \hat \bv\|_{L^2(\hat T)}^2\\
&=  (h_1 h_2)^{-1}\varrho \|\hat \nab \hat \bv\|_{L^2(\hat T)}^2\\
&\le C |\tilde T|^{-1}\varrho \sum_{i=1}^3 k_i \|\bv\cdot \bn_i\|_{L^2(e_i)}^2.
\end{align*}
We also have
\begin{align*}
\| \bv\|_{L^\infty(\tilde T)}^2  = \max\{h_2^{-2},h_1^{-2}\} \|\hat \bv\|_{L^\infty(\hat T)}^2
\le  C |\tilde T|^{-1}\varrho \sum_{i=1}^3 h_i \| \bv \cdot  \bn\|_{L^2( e_i)}^2.
\end{align*}\hfill
\end{proof}

%%%%%%%%%%%%%%%%%%%%%%%%%%%%%%%%%%%%%%%%%%%%%%%%
%%%%%%%%%%%%%%%%%%%%%%%%%%%%%%%%%%%%%%%%%%%%%%%%
%\subsection{Proof of Lemma \ref{lem:HatScale}}
%\begin{proof}
%The function $\tilde \mu$ satisfies  $\tilde \nab \tilde \mu|_{\tilde K_i} = -\frac1{\tilde k_i} \tilde \bn_i$.  Therefore,
%\begin{align*}
%|\tilde \mu|_{H^1(\tilde T)}^2
%& = \sum_{i=1}^3 |\tilde K_i| \tilde k_i^{-2}
% = \sum_{i=1}^3 \frac{\tilde h_i}{\tilde k_i}.
% \end{align*} 
% \end{proof}
 
 \subsection{Proof of Lemma \ref{lem:Tracey}}
  \begin{proof}
 We have 
 \begin{align*}
 k_i \|q\|_{L^2(e_i)}^2 \le h_i k_i \|q\|_{L^\infty(e_i)}^2 \le h_i k_i \|q\|_{L^\infty(K_i)}^2.
 \end{align*}
 Therefore by standard scaling,
 \begin{align*}
  k_i \|q\|_{L^2(e_i)}^2 \le C h_i k_i |K_i|^{-1} \|q\|_{L^2(K_i)}^2 \le C \|q\|_{L^2(K_i)}^2.
 \end{align*}\hfill
  \end{proof}

\end{document}